\DeclareRobustCommand{\qed}{%
  \ifmmode 
  \else \leavevmode\unskip\penalty9999 \hbox{}\nobreak\hfill
  \fi
  \quad\hbox{\qedsymbol}}
\newcommand{\openbox}{\leavevmode
  \hbox to.77778em{%
  \hfil\vrule
  \vbox to.675em{\hrule width.6em\vfil\hrule}%
  \vrule\hfil}}
\newcommand{\qedsymbol}{\openbox}
\newenvironment{proof}[1][\proofname]{\par
  \normalfont
  \topsep6\p@\@plus6\p@ \trivlist
  \item[\hskip\labelsep\itshape
    #1.]\ignorespaces
}{%
  \qed\endtrivlist
}
\newcommand{\proofname}{Proof}
\numberwithin{equation}{section}
\def \trans{^{\scriptscriptstyle{\intercal}}}
\def \Inf{\displaystyle\inf}
\def \b1{\bf{1}}
\def \N{\mathbb{N}}
\def \R{\mathbb{R}}
\def \E{\mathbb{E}}
\def \F{\mathbb{F}}
\def \P{\mathbb{P}}
\def \S{\mathbb{S}}
\def \W{\mathbb{W}}
\def \d{\mathrm{d}}
\def\esssup_#1{\underset{#1}{\mathrm{ess\,sup\, }}}
\def\argmin_#1{\underset{#1}{\mathrm{argmin\, }}}
\def\argmax_#1{\underset{#1}{\mathrm{argmax\, }}}
\def \Ac{{\cal A}}
\def \Fc{{\cal F}}
\def \Ic{{\cal I}}
\def \Lc{{\cal L}}
\def \Pc{{\cal P}}
\def \Nc{{\cal N}}
\def \Vc{{\cal V}}
\def \d{\mathrm{d}}
\def\beqs{\begin{eqnarray*}}
\def\enqs{\end{eqnarray*}}
\def\beq{\begin{eqnarray}}
\def\enq{\end{eqnarray}}
\newtheorem{Theorem}{Theorem}[section]
\newtheorem{Proposition}{Proposition}[section]
\newtheorem{Remark}{Remark}[section]
\numberwithin{equation}{section}
\begin{document}
\title{Linear-quadratic optimal control 
for 
non-exchangeable mean-field SDEs and applications to systemic risk 
}

\author{Anna De Crescenzo%
\thanks{LPSM, Universit\'e Paris Cit\'e and Sorbonne University, Email: decrescenzo at lpsm.paris}
\quad Filippo de Feo%
\thanks{Institut für Mathematik, Technische Universität Berlin, Berlin, Germany and Department of Economics and Finance,  Luiss Guido Carli University, Rome, Italy, Email: defeo@math.tu-berlin.de}
\quad Huyên Pham%
\thanks{Ecole Polytechnique, CMAP, Email:  huyen.pham at polytechnique.edu; This author  is supported by  the BNP-PAR Chair ``Futures of Quantitative Finance", the Chair ``Risques Financiers", and by FiME, Laboratoire de Finance des March\'es de l'Energie, and the ``Finance and Sustainable Development'' EDF - CACIB Chair.}
}

\maketitle

\begin{abstract}
We study the linear-quadratic control problem for a class of non-exchangeable mean-field systems,  which model large populations of heterogeneous interacting agents.  
We explicitly characterize the optimal control in terms of a new infinite-dimensional system of Riccati equations, for which we establish existence and uniqueness. 
To illustrate our results, we apply this framework to a systemic risk model involving heterogeneous banks, demonstrating the impact of agent heterogeneity on optimal risk mitigation strategies.
\end{abstract}

\vspace{5mm}

\noindent {\bf MSC Classification}: 49N10; 49N82; 93E20

\vspace{5mm}

\noindent {\bf Key words}: Mean-field SDE; heterogeneous interaction; graphons; linear quadratic optimal control; Riccati system; sistemic risk.


\section{Introduction}\label{sec:formulation}

The optimal control of McKean-Vlasov equations, a.k.a. mean-field control problems, 
has been extensively studied over the past decade, leading to significant advancements in stochastic analysis and partial differential equations on the Wasserstein space. These problems have found various applications, particularly in modeling cooperative equilibria in large populations of interacting agents. For a comprehensive treatment of the mathematical foundations and key developments in this field, we refer to the seminal books \cite{cardel18,benetal13}. 

A particularly important subclass of these problems is the linear-quadratic mean-field (LQ-MF) control framework, which has been widely studied due to its analytical tractability and broad range of applications \cite{yong13,huangetal15,sunyong17,baseipham2019}. In the classical setting, interactions among agents are assumed to be of symmetric mean-field type, meaning that agents are homogeneous (or exchangeable). As the number of agents tends to infinity, the problem is typically formulated in terms of a single representative agent whose controlled dynamics depends on its own state distribution.

Recently, there has been growing interest in models incorporating heterogeneous and asymmetric interactions, as seen in complex networks. The theory of graphons \cite{lovasz} has provided a natural framework for modeling such systems and its applications to mean-field SDEs (\cite{bayetal23, copdecpha24, jabin25,cructangpi}), to differential games (\cite{auretal22}), to mean field games (\cite{caihua20, lacker_label-state_2023,LQMFG2024}). 
We also refer to \cite{fenghuhuang2023} for mean-field teams with heterogeneity and quasi-exchangeability. 
More generally, the work in \cite{MFCnon-exch} introduced a framework for the control of non-exchangeable mean-field systems, where agent heterogeneity leads to an infinite-dimensional system of mean-field equations indexed by agent labels. {One of the main difficulties in treating heterogeneous dynamics is related to the fact that }the dynamics of each agent depends not only on its own state distribution but also on the entire collection of labeled state distributions. {In particular, this leads to work on a new space of collection of measures and to develop the dynamic programming approach in this new probabilistic setting.}

\paragraph{Contributions.}  
In this work, we introduce the linear-quadratic non-exchangeable mean-field (LQ-NEMF) control problem, which generalizes the classical LQ-MF framework by incorporating heterogeneous interactions.
In the context of cooperative non-exchangeable games with a finite number $N$ of players, for $i\in\llbracket1,N\rrbracket$, consider the controlled dynamics 
\begin{align}
    \d X^{i,N}_s =& \Big[\beta^i + A^iX^{i,N}_s + \frac{1}{N_i^A}\sum_{j=1}^NG_A(u_i,u_j)X^{j,N}_s+B^i\alpha^{i,N}_s\Big]\d s\\
    &\quad+ \Big[\gamma^i + C^iX^{i,N}_s + \frac{1}{N_i^C}\sum_{j=1}^NG_C(u_i,u_j)X^{j,N}_s+ D^i\alpha^{i,N}_s\Big]\d W^i_s,
\end{align}
where $u_i=i/N$, $G_A,G_C$ are {functions in $L^2([0,1]\times[0,1];\R^{d\times d})$}, measuring the weight of interaction between the agents, {$N_i^Z = \sum_{j=1}^NG_Z(u_i,u_j)$} is the degree of interaction {for $Z=A,C$}, $(W^i)_{i\in\llbracket1,N\rrbracket}$ is a collection of i.i.d. Brownian motions. The aim is to minimize over the controls $(\alpha^{1,N},\dots,\alpha^{N,N})$ a common cost given by
\begin{align}
    &\frac{1}{N}\sum_{i=1}^N\E\Big[  \int_0^T \langle X^{i,N}_s,Q^i X^{i,N}_s \rangle + \langle X^{i,N}_s,\frac{1}{N_i^Q}\sum_{j=1}^N G_{Q}(u_i,u_j)X^{j,N}_s\rangle + \langle \alpha^{i,N}_s,R^i \alpha^{i,N}_s \rangle\\
    &\qquad \qquad + \;  \langle X^{i,N}_T, H^i X^{i,N}_T \rangle + \langle X^{i,N}_T, \frac{1}{N_i^H}\sum_{j=1}^N G_{H}(u_i,u_j)X^{j,N}_T \rangle \Big],
\end{align}
with $G_Q,G_H$ {functions in $L^2([0,1]\times[0,1];\R^{d\times d})$, and {$N_i^Z = \sum_{j=1}^NG_Z(u_i,u_j)$}, {for $Z=Q,H$}}.
When the number of agents $N$ tends to infinity, we formally expect the system to converge to a continuum of players indexed by $u\in U=[0,1]$ (see for instance \cite{bayetal23,copdecpha24}).
Hence, we are led to consider the following limit problem, with state equations
\begin{align}
    &\d X^u_s =\Big[ \beta^u + A^u X^u_s +  \int_U G_{  A}(u,v)\E[X^v_s] \d v + B^u \alpha^u_s \Big] \d s \\
    &\qquad\qquad +\Big[\gamma^u + C^u  X^u_s +    \int_U G_{C}(u,v) \E[ X^v_s]\d v + D^u \alpha^u_s \Big] \d W^u_s,\quad s\in[0,T].
\end{align}
The goal is to minimize, over all admissible controls $(\alpha^u)_u$, a cost functional of the form
\begin{equation}
\begin{aligned}\label{eq:functional_intro}
    &\int_U\E\Big[  \int_0^T \langle X^u_s,Q^u X^u_s \rangle + \langle \E[X^u_s],\int_U G_{Q}(u,v)\E[X^v_s]\d v\rangle + \langle \alpha^u_s,R^u \alpha^u_s \rangle\\
    &\qquad \qquad + \;  \langle X^u_T, H^u X^u_T \rangle + \langle \E[X^u_T], \int_U G_{H}(u,v)\E[X^v_T]\d v \rangle \Big]\d u.
\end{aligned}
\end{equation}
Here the heterogeneous interaction between players is modeled by the coefficients {$G_A,G_C,$ $G_Q,G_H \in L^2(U\times U;\R^{d \times d})$}. This setting includes graphon interaction, but we do not assume these coefficients to be symmetric in $(u,v)$. We prove well-posedness of the state equation and the cost functional (Theorem \ref{exuniqsol}).
{In line with the results obtained in \cite{MFCnon-exch} for more general control problems, also here the dynamics of each player depends on the whole collection of expectations $\{\E[X^u]\}_{u\in U}$, as opposed to the standard LQ MKV control (see for example \cite{baseipham2019}).}

Our goal is to solve the control problem by providing an explicit charac\-terization of the optimal control. Inspired by the literature on LQ problems (see for example for finite dimensional spaces \cite{yong-zhou,peng1992}, and for infinite dimensional spaces \cite{tessitore1992, guattes2005,hutang2022,abimillerpham2021}), we prove a fundamental decomposition of the cost functional (Proposition \ref{prop:fundamental_rel}), which expresses it as the sum of a quadratic term in the control and a constant term. 
This decomposition relies on a new infinite-dimensional system of two Riccati equations, i.e. \eqref{eq:riccati_Ku}, \eqref{eq:abstract_riccati_barK}, for which we establish existence and uniqueness. Equation \eqref{eq:riccati_Ku} is an infinite system on the space $L^\infty(U;\R^{d\times d})$ of decoupled standard Riccati equations and can be solved using standard results. On the other hand, equation \eqref{eq:abstract_riccati_barK} is a new type of Riccati equation on the space $L^2(U\times U;\R^{d\times d})$, which is intrinsically infinite-dimensional (as it encodes heterogenous interactions) and it is not covered by the available literature. {This marks a fundamental difference with the standard LQ MKV control, where the two Riccati equations characterizing the problem are standard matrix Riccati equations that can be solved using the standard theory presented in \cite{yong-zhou}.}
The fundamental decomposition provided in Proposition \ref{prop:fundamental_rel} plays a crucial role in solving the Riccati equation \eqref{eq:abstract_riccati_barK}, by finding a suitable a-priori estimate on its solution (Proposition \ref{cor:a_priori_estimate_barK}), and in deriving the optimal control (Theorem \ref{th:verification}).
These results extend previous findings in the LQ-MF setting.

To illustrate the practical relevance of our results, in Section \ref{sec:systemic_risk} we introduce a systemic risk model with a continuum of heterogeneous banks, indexed by $u$, extending the one  in \cite{systemicrisk} with homogeneous interactions (i.e. a standard LQ-MF problem). Indeed, we assume that the log-monetary reserve of each bank follows a dynamics of the form
\begin{align} 
\d X^u_s &= \; \Big[
k\big(
X^u_s -\int_U \tilde G_{k}(u,v)\,\mathbb E[ X^{v}_s] \,\d v \big)
+\alpha^u_s \Big] \,\d s + \sigma^u\, \d W^u_s,
\end{align} 
where each bank can control its rate of borrowing/lending to a central bank via the policy $\alpha^u$ and $k \le 0$ is the rate of mean-reversion in the interaction from borrowing and lending between the banks. The central bank aims to mitigate  systemic risk by minimizing an opportune aggregate cost functional, leading to a non-exchangeable mean field control problem of the form \eqref{eq:functional_intro}.
Applying the theory developed, we  solve the control problem.

\paragraph{Outline of the paper.}
The remainder of this paper is organized as follows. In Section~\ref{sec:LQNEMF}, we formally introduce the LQ-NEMF control problem. Section~\ref{sec:derivation} presents the infinite-dimensional Riccati system associated with LQ-NEMF and establishes the fundamental relation. The existence and uniqueness of a solution to the Riccati system are proved in Section~\ref{sec:existence}, while Section~\ref{sec:verif_thm} details  the derivation of the optimal control. In Section~\ref{sec:systemic_risk}, we apply our results to the systemic risk model, demonstrating the impact of agent heterogeneity in financial networks. In Appendix \ref{app:exuniqsol} we prove well-posedness of the state equation and of the cost functional. In Appendix \ref{app:different-formulations} we provide different formulations of our control problem.

\paragraph{Notations.} In the following, for each $q\in\N$, $\S^q$ (resp. $\S^q_+$, $\S^q_{>+}$) is the set of $q$-dimensional symmetric (resp. positive semi-definite, positive definite) matrices. Given a vector $x\in\R^d$, we denote by $x^i$ the $i$-th component of vector $x$. We denote by $M\trans$ the transpose of a matrix $M$, and 
for two vectors $x,y$, we denote by $<x,y>$ $=$ $x\trans y$ its Euclidian scalar product.
We denote by $C^d_{[t,T]}$ the space of continuous functions $f:[t,T]\to \R^d$.
Given a Banach space $X$, we denote by $\Lc(X)$ the space of linear bounded operators $L$ on $X$, with norm defined by $\|L\|_{\Lc} = \sup_{\|x\|_X\ne 0}\frac{\|Lx\|_X}{\|x\|_X}$.

Let $T>0$ be a finite horizon, $(\Omega,\Fc,\P)$ a complete probability space, $U:=[0,1]$ endowed with its Borel $\sigma$-algebra. 
For every $u\in U$, we consider an $\R$-valued standard Brownian motion $W^u$ and an independent real random variable $Z^u$ having uniform distribution in $(0,1)$. For simplicity we consider dynamics driven by real-valued Brownian motions, but all the results can be extended to the case of 
multi-dimensional Brownian motions. We assume that $\{(W^u,Z^u)\,:\, u\in U\}$ is an independent family. 
For every $u\in U$
we denote by $(\Fc^{W^u}_t)_{t\ge 0}$ the natural filtration generated by $W^u$, by $\sigma(Z^u)$ the $\sigma$-algebra generated by $Z^u$ 
and by  
$\F^u=(\Fc^{u}_t)_{t\ge 0}$ the filtration given by 
$\Fc^{u}_t= \Fc^{W^u}_t\vee \sigma(Z^u) \vee \Nc, t\ge0,
$
where $\Nc$ is the family of $\P$-null sets. 
For a Banach space $E$, with norm $|.|_{_E}$, we denote by $L^2(U;E)$, resp. $L^\infty(U;E)$,  the set of elements $\phi$ $=$ 
$(\phi^u)_{u\in U}$, s.t. $u$ $\mapsto$ $\phi^u$ $\in$ $E$  is measurable, and 
$\int_U |\phi^u|_{_E}^2 \d u$ $<$ $\infty$, resp. $\sup_{u\in U} |\phi^u|_{_E}$ $<$ $\infty$.  
{We also denote by $L^2(U\times U;E)$, resp. $L^\infty(U\times U;E)$ the set of measurable functions $G$ $:$ $U\times U$ $\rightarrow$ $E$, s.t. $|G|^2_{_{L^2_{U\times U}}}$ $:=$ $\int_U\int_U |G(u,v)|_E^2\, \d u \d v$ $<$ $\infty$, resp. $\sup_{u,v \in U\times U} |G(u,v)|_{_E}$ $<$ $\infty$.} 
{For a kernel $G\in L^2(U \times U;\mathbb R^{d \times d})$, we will  use Hilbert Schimdt integral operators, denoted by $\mathbf G$ , i.e. 
\begin{equation}\label{eq:Hilbert-schmidt_integral}
    \mathbf G\in \mathcal L(L^2(U \times U;\mathbb R^{d })), \quad  \mathbf G f:= \int_U G(u,v) f^v\d v, \quad \|\mathbf G\|_{\mathcal L} \leq |G|_{_{L^2_{U\times U}}},
\end{equation}
We remark that we will use this notation throughout the paper.}

{We also introduce the notation 
\begin{equation}\label{eq:GS}
    G^S(u,v) = \frac{G(u,v)+G(v,u)\trans}{2}
\end{equation}
and we observe that 
\begin{align}\label{eq:symmetric_structure_graphon_cost_term_rem}
    \int_U \langle y^u,\int_U  G(u,v)y^v\d v \rangle \d u =  \int_U \langle y^u,\int_U G^S(u,v)y^v\d v \rangle \d u , \quad \forall y \in L^2(U;\mathbb R^d),
\end{align}
Indeed, we have
\begin{align}
& \int_U \langle y^u, \int_U G(u,v) y^v\d v \rangle \d u=\int_U\int_U \langle y^v,G(v,u)y^u\rangle\d v\d u = \int_U\int_U \langle y^u, G(v,u)\trans y^v\rangle\d v\d u,
\end{align}
so that, summing $\int_U \langle y^u, \int_U G(u,v)y^v\d v \rangle \d u $ on both members of the previous equality and dividing by $2$, we have \eqref{eq:symmetric_structure_graphon_cost_term_rem}.
Observe that $G^S(u,v) = G^S(v,u)\trans$.
}

In the following, we write $M>0$ for a constant that might change from line to line. 

\section{Problem formulation} \label{sec:LQNEMF}

\paragraph{State equation.}
We consider a collection of state $X$ $=$ $(X^u)_u$ process, indexed by $u\in U=[0,1]$, and controlled by the collection 
$\alpha$ $=$ $(\alpha^u)_u$, driven by  
\begin{align}\label{dynamics}
\begin{cases}
    &\d X^u_s =\left [ \beta^u + A^u X^u_s +  \int_U G_{  A}(u,v)\bar X^v_s \d v + B^u \alpha^u_s \right] \d s \\
    &\qquad\qquad +\left [\gamma^u + C^u  X^u_s +    \int_U G_{C}(u,v) \bar X^v_s\d v + D^u \alpha^u_s \right ] \d W^u_s,\quad s\in[t,T]\\ 
    & X_t^u=\xi^u, \qquad u \in U, 
\end{cases}
\end{align}
where 
 {$\beta \in L^2(U;\R^d)$, $\gamma\in L^2(U;\R^{d})$,} $A \in L^\infty(U;\R^{d\times d}),$ $C\in L^\infty(U;\R^{d\times d})$, $B\in L^\infty(U;\R^{d\times m}),$ $D\in L^\infty(U;\R^{d\times m})$, {$G_{A}\in L^2(U\times U; \R^{d\times d})$, $G_{C}\in L^2(U\times U; \R^{d\times d})$}, and we have used the notation $$\bar X_s^u:=\mathbb E[X_s^u].$$  
 
The set $\Ic=\Ic_t$ of initial conditions contains collections ${\xi}=(\xi^u)_u$ of $\R^d$-valued random {variables} such that $\xi^u$ is $\Fc_t^u$-measurable for each $u\in U$, the maps $u\mapsto \E[\xi^u]$, $u\mapsto\E[\xi^{u,i}_{s}\xi^{u,j}_{s}]$ are Borel measurable for all 
$s\in[0,T], i,j\in{1,\dots,d}$, and $\int_U\E[|\xi^u|^2]\d u <\infty$.

Given $\xi\in\Ic$, the set $\Ac=\Ac_t(\xi)$ of admissible controls  starting from time $t$,  
is  defined as follows.
For an arbitrary Borel measurable function 
$\alpha: U\times [t,T]\times C([t,T];\R)\times (0,1)\to   \mathbb R^m$, 
we define 
$
\alpha_s^u=\alpha(u,s, W^u_{\cdot\wedge s},Z^u),$ $s\in [t,T],\,u\in U,
$
where $W^u_{\cdot\wedge s}$ is the path $r\mapsto 
W^u_{r\wedge s}$, $s\in [t,T]$.
We note that each process $(\alpha_s^u)_s$ is $\F^u$-predictable.
We say that $\alpha$ is an 
 admissible control policy (or simply a policy) if $u\mapsto\E[\alpha^u\xi^u]$ is Borel measurable and
$\int_U \int_t^T\E[|\alpha_s^u|^2]\,\d s\,\d u\; < \; \infty.$
Observe that this integral is well defined, since $\E[|\alpha^u_s|^2]$ is measurable in $u$. Indeed, we can write 
$\E[|\alpha^u_s|^2] = \E[|\alpha(u,s,W^u_{\cdot\wedge s},Z^u)|^2] = \int_{C_{[0,s]\times(0,1)}}|\alpha(u,s,w(\cdot\wedge s),z)|^2\W_T(\d w)\d z,$
where $\W_T$ denotes the Wiener measure on $C_{[0,T]}$.

{\begin{Remark}
    Observe that the fact that the admissible control $\alpha^u$ is measurable with respect to the (idiosyncratic) noise $(W^u,Z^u)$ (and not to the full filtration generated by the collection $(W^u,Z^u)_u$) is in line with standard mean-field models (see for instance the linear quadratic case in \cite{baseipham2019}), where the control of the (representative) player in the mean-field limit is adapted to the filtration generated by the (representative) Brownian motion. 
\end{Remark}}

\vspace{2mm}

We point out that the setting considered here is new as  we do not require the measurability of $u \mapsto \mathbb P_{\xi^u}$ as in  \cite{bayetal23,MFCnon-exch}. Next, we show the well-posedness of the system \eqref{dynamics}.

\begin{Theorem}\label{exuniqsol}
    Let $t\in[0,T]$, $\xi\in\Ic$, and $\alpha\in\Ac$. Then there exists a unique solution $X=(X^u)_u$ to the state equation \eqref{dynamics}, i.e.,  a family $X=(X^u)_u$ of stochastic processes with values in $\R^d$ such that the maps $u\mapsto\E[X^u_s]$, $u\mapsto\E[X^{u,i}_{s}X^{u,j}_{s}]$ are Borel measurable $\forall s,i,j$, each process $X^u$ is continuous and $\F^u$-adapted and $\int_U\E[\sup_{s\in[t,T]}|X^u_s|^2]\d u<\infty$. 
\end{Theorem}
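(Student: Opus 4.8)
The plan is to decouple the system \eqref{dynamics} by first solving for the deterministic means $\bar X^u_s = \E[X^u_s]$ in the Banach space $L^2(U;\R^d)$, and then to solve each individual SDE for $X^u$ with the means treated as given inhomogeneous coefficients. Taking expectations in \eqref{dynamics} (the martingale part vanishes, using the square-integrability that we will verify a posteriori), the collection $\bar X = (\bar X^u_s)_u$ must satisfy the linear integral ODE
\begin{align}
\dot{\bar X}^u_s = \beta^u + A^u \bar X^u_s + \int_U G_A(u,v)\bar X^v_s\,\d v + B^u \bar\alpha^u_s, \qquad \bar X^u_t = \E[\xi^u],
\end{align}
where $\bar\alpha^u_s := \E[\alpha^u_s]$. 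I would rewrite this as an evolution equation $\dot{\bar X}_s = \mathbf{A}_s \bar X_s + f_s$ on $L^2(U;\R^d)$, where the bounded operator $\mathbf{A}_s$ is multiplication by $A^u$ plus the Hilbert--Schmidt operator $\mathbf{G}_A$ from \eqref{eq:Hilbert-schmidt_integral}, and $f_s \in L^2(U;\R^d)$ collects $\beta$ and $B\bar\alpha_s$ (note $\int_U\int_t^T |B^u\bar\alpha^u_s|^2\d s\d u \le \|B\|_{L^\infty}^2 \int_U\int_t^T\E[|\alpha^u_s|^2]\d s\d u < \infty$ by admissibility and Jensen). Since $s\mapsto\mathbf{A}_s$ is bounded and measurable and $f \in L^1([t,T];L^2(U;\R^d))$, standard linear ODE theory in Banach spaces (Picard iteration / variation of constants with the propagator generated by $\mathbf{A}_s$) yields a unique mild solution $\bar X \in C([t,T];L^2(U;\R^d))$; Borel measurability of $u\mapsto\bar X^u_s$ comes for free from membership in $L^2(U;\R^d)$ (working with measurable representatives).

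With $\bar X$ now a fixed, known element, the equation for each fixed $u\in U$ becomes a standard linear SDE
\begin{align}
\d X^u_s = \left[\beta^u + A^u X^u_s + b^u_s + B^u\alpha^u_s\right]\d s + \left[\gamma^u + C^u X^u_s + c^u_s + D^u\alpha^u_s\right]\d W^u_s, \quad X^u_t = \xi^u,
\end{align}
with $b^u_s := \int_U G_A(u,v)\bar X^v_s\d v$ and $c^u_s := \int_U G_C(u,v)\bar X^v_s\d v$, both in $L^2([t,T];\R^d)$ for a.e. $u$ (by Cauchy--Schwarz, $\int_t^T|b^u_s|^2\d s \le \int_t^T |G_A(u,\cdot)|_{L^2}^2 \|\bar X_s\|^2_{L^2(U;\R^d)}\d s$, which is integrable in $u$ since $G_A\in L^2(U\times U)$). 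For each such $u$, bounded coefficients $A^u,C^u,B^u,D^u$, square-integrable inhomogeneities, and $\xi^u\in L^2$ give a unique continuous $\F^u$-adapted strong solution with $\E[\sup_{s}|X^u_s|^2]<\infty$ via the classical Lipschitz SDE theory, together with the standard moment estimate
\begin{align}
\E\Big[\sup_{s\in[t,T]}|X^u_s|^2\Big] \le M\Big(\E[|\xi^u|^2] + |\beta^u|^2 + |\gamma^u|^2 + \int_t^T(|b^u_s|^2 + |c^u_s|^2 + \E[|\alpha^u_s|^2])\d s\Big).
\end{align}
Integrating this bound over $u\in U$ and using all the integrability facts above shows $\int_U\E[\sup_s|X^u_s|^2]\d u<\infty$. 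Measurability of $u\mapsto\E[X^{u,i}_s X^{u,j}_s]$ follows because $X^u_s$ depends measurably on $u$ through the coefficients $\beta^u,\gamma^u,A^u,C^u,B^u,D^u,b^u_s,c^u_s$ and through the common control map $\alpha(u,s,\cdot,\cdot)$, so one can exhibit $X^u$ as a measurable-in-$u$ limit of Picard iterates and pass to the limit in the second moments.

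Finally I would close the consistency loop: the $X^u$ just constructed satisfies $\E[X^u_s] = \bar X^u_s$. This holds by taking expectations in the individual SDE — legitimate now that $\E[\sup_s|X^u_s|^2]<\infty$ makes the stochastic integral a true martingale — which shows $s\mapsto\E[X^u_s]$ solves exactly the same linear integral ODE as $\bar X^u$ with the same initial datum, hence equals it by the uniqueness established in the first step. Uniqueness for the full system \eqref{dynamics} is then immediate: any solution has means solving that ODE, hence equal to $\bar X$, and then each $X^u$ is the unique solution of its decoupled SDE. The main obstacle is not any single estimate — all are classical — but the careful bookkeeping of joint measurability in $(u,\omega)$ and the interplay between the three function spaces $L^2(U;\R^d)$, $L^2(U\times U;\R^{d\times d})$ and the pathwise SDE solutions; this is where the non-standard "collection of measures" setting of \cite{MFCnon-exch} demands attention, and I expect the proof in the appendix to spend most of its effort there.
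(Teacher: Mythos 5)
Your proof is correct, and it rests on the same core observation as the paper's --- the interaction enters \eqref{dynamics} only through the collection of means, so everything reduces to pinning down $(\bar X^u)_u$ in $L^2(U;C^d_{[t,T]})$ --- but you organize the argument differently. The paper defines the map $\boldsymbol\Psi(Y)=\E[X^Y]$, where $X^Y$ solves the SDE with $Y$ substituted for $\bar X$, proves it is a contraction on $L^2(U;C^d_{[t,T]})$ (the diffusion drops out under expectation, so only $G_A$ appears in the Lipschitz constant), and extracts the solution as the fixed point; the measurability bookkeeping for $u\mapsto\E[X^{Y,u}_s]$ and $u\mapsto\E[X^{Y,u,i}_sX^{Y,u,j}_s]$ is done once inside the well-posedness of $\boldsymbol\Psi$ via Picard iterates. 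You instead note that $\boldsymbol\Psi$'s fixed point is nothing but the solution of the closed affine evolution equation $\dot{\bar X}_s=\mathbf A_s\bar X_s+f_s$ on $L^2(U;\R^d)$, solve that deterministic equation first, then solve each pathwise SDE exactly once with the means as frozen data, and close the loop by a consistency/uniqueness check. Your route is slightly more economical and makes the mean dynamics explicit (which is anyway what one wants downstream for the Riccati analysis); the paper's fixed-point formulation is more robust, in that it would survive a nonlinear or non-mean dependence on the collection of laws where the mean equation no longer closes. Two small points to tighten: (i) you should note explicitly that $u\mapsto\bar\alpha^u_s=\E[\alpha^u_s]$ is Borel measurable (it follows from the same Wiener-measure representation the paper uses for $\E[|\alpha^u_s|^2]$), since this is needed for $f_s\in L^2(U;\R^d)$; and (ii) the measurability of $u\mapsto\E[X^{u,i}_sX^{u,j}_s]$ genuinely uses the admissibility hypothesis that $u\mapsto\E[\alpha^u\xi^u]$ is measurable (the cross terms $\E[\xi^{u,i}\alpha^{u,j}_s]$ appear in the Picard iterates), which you only gesture at; the paper spells this induction out.
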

\begin{proof}
See Appendix \ref{app:exuniqsol}
\end{proof}

\begin{Remark}\label{rem:non-autonomous_coeff}
\begin{enumerate}
    \item {The result extends with straightforward modifications} in the proof to the case of time-dependent coefficients if {$$\sup_{t\in[0,T]}\{|\beta_t|_{L^2_U},|\gamma|_{L^2_U},|A_t|_{L^\infty_U},|C_t|_{L^\infty_U}, |B_t|_{L^\infty_U},|D_t|_{L^\infty_U},|G_{A,t}|_{L^2_{U\times U}},|G_{C,t}|_{L^2_{U\times U}} \}<\infty$$}
    \item The measurability of the maps $u\mapsto\E[X^u_s]$, $u\mapsto\E[X^{u,i}_{s}X^{u,j}_{s}]$ ensures (respectively) the well-posedness of the terms of type $\int_UG_L(u,v)\bar X^v_s\d v$ in the state equation (for $G_L = G_A,G_C$) and the terms of type $\int_U\E[\langle X^u_s,L^uX^u_s\rangle]$ in the cost functional (for $L=Q,H$).
\end{enumerate}
\end{Remark}

\vspace{3mm}

By standard estimates and Gronwall lemma, there exists $M_T>0$ such that for all $\xi,\alpha$, 
\begin{small}
\begin{align}\label{estimX}
\int_U \E \bigg[\sup_{s\in [t,T]} |X_s^u|^2\bigg] \,\d u
& \le \;  
M_T\left((T-t){\int_U(|\beta^u|^2 + |\gamma^u|^2) \d u}+ \int_U \E[|\xi^u|^2]\,\d u
+\int_U \int_t^T\E[|\alpha_s^u|^2]\,ds\,\d u\right).
\end{align}
\end{small}

\paragraph{Functional cost.} Given $t\in[0,T]$ and $\xi\in\Ic$, the goal is  to minimize, over all $\alpha \in \mathcal A$, a  functional of the form 
\begin{align}\label{eq:J_theory}
    J_{}(t,\xi,\alpha) &:= \;  \int_U\E\Big[  \int_t^T \langle X^u_s,Q^u X^u_s \rangle + \langle \bar X^u_s,\int_U G_{Q}(u,v)\bar X^v_s\d v\rangle + \langle \alpha^u_s,R^u \alpha^u_s \rangle\\
    &\qquad \qquad + \;  \langle X^u_T, H^u X^u_T \rangle + \langle \bar X^u_T, \int_U G_{H}(u,v)\bar X^v_T\d v \rangle \Big]\d u,
\end{align}
where $Q, H \in L^\infty(U;\S_+^{d}),$ $G_{ Q}, G_{ H}$ $\in$ ${L^2(U\times U;\R^{d\times d})}$, $R\in L^\infty(U;\S_{>+}^{m})$ {and there exists a $c>0$ such that for almost every $u\in U$
\begin{equation}\label{ass:posR}
    \langle R^uy,y\rangle \ge c|y|^2,\quad\forall y\in\R^d.
\end{equation}
}

\vspace{1mm}

{
Throughout the whole paper, we assume
\begin{equation}
\begin{aligned}\label{eq:positivity_hpII}
   & \int_U \mathbb E \langle X^u,Q^uX^u\rangle du + \int_U \langle \bar X^u,  \int_UG_{Q}(u,v)\bar X^v \d v \rangle  du  \geq 0;\\
   & \int_U \mathbb E \langle X^u,H^u  X^u\rangle du + \int_U \langle  \bar X^u,  \int_UG_{H}(u,v)\bar X^v \d v \rangle  du  \geq 0,
\end{aligned}
\end{equation}
for every $X$ such that $\Lc(X)\in L^2(\Pc_2(\R^d))$. 
Under such assumption, it follows
\begin{align}\label{eq:J_geq0}J(t,\xi,\alpha)\geq 0, \quad \forall t,\xi,\alpha.
\end{align}}
{
\begin{Remark}
    Clearly, a sufficient condition which implies \eqref{eq:positivity_hpII} is 
\begin{equation}
\begin{aligned}\label{eq:positivity_hp}
   &  \int_U \langle z^u,  \int_UG_{Q}(u,v)z^v \d v \rangle  du  \geq 0,  \quad \int_U \langle y^u,  \int_UG_{H}(u,v)y^v \d v \rangle  du  \geq 0, \quad \forall z \in L^2(U)
\end{aligned}
\end{equation}
(plus that $Q^u$ and $H^u$ are non-negative).
   Define the linear operator 
   \begin{align}
     & \boldsymbol{G}_Q : L^2(U;\R^d)  \rightarrow L^2(U;\R^d),  \quad
   \boldsymbol{G}_Q(z)(u):= \int_U G_{Q}(u,v)z^v\d v,
\end{align}
for every $z \in L^2(U;\R^d)$; analogously, define the linear operator $ \boldsymbol G_H$ associated to $G_H$.
Then, \eqref{eq:positivity_hp} correspond to the requirement that $\boldsymbol G_Q, \boldsymbol G_H$ are non-negative operators, i.e.
$$\langle  z,\boldsymbol G_Q z \rangle_{L^2_{U\times U}} \geq 0, \quad \langle  z ,\boldsymbol G_H z \rangle_{L^2_{U\times U}} \geq 0.$$
However, \eqref{eq:positivity_hp} does not need to be satisfied in the centered formulation (see subsection \ref{subsec:centered_formulation}) for \eqref{eq:positivity_hpII} to be true. 
\end{Remark}
}

\vspace{2mm}

From \eqref{estimX}, we see that for all $t$ $\in$ $[0,T]$, $\xi$ $\in$ $\Ic$, $\alpha$ $\in$ $\Ac$, 
\begin{small}
\begin{align}\label{estimJ}
|J_{}(t,\xi,\alpha)| \le \;  
M_T\left( (T-t){\int_U(|\beta^u|^2 + |\gamma^u|^2)\d u} +  \int_U \E[|\xi^u|^2]\,\d u
+\int_U \int_t^T\E[|\alpha_s^u|^2]\,\d s\,\d u\right).
\end{align}
\end{small}

\begin{Remark}
    Notice that our formulation is different from the one on Hibert spaces  in \cite{cosetal23} (and for LQ MFGs \cite{salgozghi25,liufir2024}), { where the goal is to build a theory for abstract McKean--Vlasov SDEs under assumptions of homogeneity and anonymity.} In our case we cannot reformulate our dynamics as a {McKean--Vlasov} SDE with values on a Hilbert space.
    {In particular, the standard formulation which uses Wiener processes on Hilbert spaces (see for example \cite{fabbrigozziswiech}) is not well-suited for our model. This is due to the fact that we  consider an uncountable collection of heterogeneous players, thus an uncountable collection of independent Brownian motions, which is a different object than the Wiener process on Hilbert space.}
    Furthermore, {in our setting we do not have} the measurability of the Brownian motions, and therefore of the state, with respect to the variable $u$ (see for example \cite{sunexactlln06}). {A way to retrieve the measurability of the states with respect to the variable $u$ would be to use Fubini extensions (see for example \cite{sunexactlln06,copdecpha24,cructangpi}). However, even in this situation we would not be able to use the standard theory for the above reasons.}
\end{Remark}

\section{Infinite dimensional system and fundamental relation}\label{sec:derivation}
\subsection{Derivation of the Riccati equations}{As in the literature of linear-quadratic control problems, we start with a quadratic ansatz for the value function, i.e. we consider
\begin{align} 
    \mathcal{V}(t,\xi) & := \int_U\E[\langle \xi^u, K^u_t \xi^u \rangle]\d u 
    + \int_U\int_U\langle\bar \xi^u,\bar K_t(u,v)\bar \xi^v\rangle\d v \d u \\
    &\qquad\qquad\qquad\qquad + \; 2 \int_U \E[\langle  Y^u_t,\xi^u\rangle] \d u + \int_U \Lambda^u_t\d u,
\end{align}
where $K\in C([0,T];\S^d),\,\bar K\in C([0,T];L^2(U\times U;\R^{d\times d})),\, Y\in C([0,T];L^2(U;\R^d)),\,\Lambda\in C([0,T];L^\infty(U;\R))$ are suitable functions to be determined later. The terminal condition is given by 
\begin{align}
\mathcal{V}(T,\xi) :=& \int_U\E[\langle \xi^u, H^u \xi^u \rangle]\d u 
    + \int_U\int_U\langle\bar \xi^u,G_H(u,v)\bar \xi^v\rangle\d v \d u\\
    =& \int_U\E[\langle \xi^u, H^u \xi^u \rangle]\d u 
    + \int_U\int_U\langle\bar \xi^u,G^S_H(u,v)\bar \xi^v\rangle\d v \d u,
\end{align}
where in the second equality we have used \eqref{eq:symmetric_structure_graphon_cost_term_rem}.
Then a standard strategy in finite and infinite dimension for LQ problems is to use this ansatz and Ito's formula in order to decompose the cost functional $J$ into the sum of a quadratic term in the control and a constant term, leading to suitable Riccati equations. 
Following this strategy, we introduce:
}
\begin{align}
    \mathcal{V}_t & := \int_U\E[\langle X^u_t, K^u_t X_t^u \rangle]\d u 
    + \int_U\int_U\langle\bar X_t^u,\bar K_t(u,v)\bar X_t^v\rangle\d v \d u  + \; 2 \int_U \E[\langle  Y^u_t,X^u_t\rangle] \d u + \int_U \Lambda^u_t\d u.
\end{align}
For the sake of simplicity, we provide the following calculations in the case of $\beta^u=0$, $\gamma^u=0$ (recall also Remark \ref{solnulle}). Hence, we have
\begin{align}
    \mathcal{V}_t & = \int_U\E[\langle X^u_t, K^u_t X_t^u \rangle]\d u 
    + \int_U\int_U\langle\bar X_t^u,\bar K_t(u,v)\bar X_t^v\rangle\d v \d u .
\end{align}
Due to the definition of $\mathcal{V}_t$, we can use standard It\^o's formula for a.e. $u$ and then integrate over $U.$ Notice that in this way, we do not need the notion of derivatives with respect to measures (e.g. see \cite{MFCnon-exch}).
Proceeding in this way, we compute (denoting $\sigma_t^u:=\gamma^u + C^u  X^u_t +    \int_U G_{C}(u,v) \bar X^v_t\d v + D^u \alpha^u_s$): 
\begin{small}
\begin{align}
    &\d \bigg( \mathcal{V}_t + \int_0^t\int_U\E\bigg[ \langle X^u_s,Q^uX^u_s \rangle + \langle \bar X^u_s,  \int_UG_Q(u,v)\bar X^v_s\d v\rangle + \langle \alpha^u_s,R^u\alpha^u_s \rangle \bigg]\d u\d s \bigg)\\
    &=\int_U \mathbb E \left[\langle \d X_t^u,K^u_t X_t^u\rangle+ \langle X_t^u,\d (K^u_t X_t^u)\rangle  + \langle  {\sigma_t^u, K_t^u \sigma_t^u} \rangle \d t  \right]du\\
    &\quad   + \int_U\int_U \left[\langle \d\bar X_t^u, \bar K_t(u,v) \bar X_t^v\rangle+ \langle \bar X_t^u,d(\bar K_t(u,v) \bar X_t^v)\rangle  \right]du dv\\
    & \quad + \int_U \E\left[\langle X^u_t,Q^u X^u_t\rangle + \int_U\langle \bar X^u_t, G_Q(u,v)\bar X^v_t\rangle\d v + \langle \alpha^u_t, R^u\alpha^u_t\rangle\right]\d u\\
    &=\int_U \mathbb E \left[\langle \d X_t^u,K^u_t X_t^u\rangle+ \langle X_t^u,\d K^u_t X_t^u\rangle +  \langle  X_t^u, K^u_t \d X_t^u\rangle + \langle \sigma_t^u,K_t^u \sigma_t^u \rangle \d t \right]du\\
    &\quad   + \int_U\int_U \left[\langle \d\bar X_t^u, \bar K_t(u,v) \bar X_t^v\rangle+ \langle \bar X_t^u,\d \bar K_t(u,v) \bar X_t^v\rangle +  \langle \bar  X_t^u, \bar K_t(u,v) \d \bar X_t^v\rangle  \right]du dv\\
     & \quad + \int_U \E\left[\langle X^u_t,Q^u X^u_t\rangle + \int_U\langle \bar X^u_t, G_Q(u,v)\bar X^v_t\rangle\d v + \langle \alpha^u_t, R^u\alpha^u_t\rangle \right]\d u\\
     & = \int_U\E\bigg[ 
     \langle A^uX^u_t +  \int_UG_{  A}(u,v)\bar X^v_t\d v +B^u\alpha^u_t,K^u_tX^u_t \rangle+ \langle X^u_t, \dot K^u_tX^u_t\rangle \\
     & \quad 
     + \langle K^u_t A^uX^u_t + K^u_t  \int_UG_{  A}(u,v)\bar X^v_t\d v +K^u_t B^u\alpha^u_t, X^u_t \rangle\\
    &\quad  + \langle K^u_t(C^uX^u_t +   \int_UG_{C}(u,v)\bar X^v_t\d v + D^u\alpha^u_t ), C^uX^u_t +   \int_UG_{C}(u,w)\bar X^w_t\d w + D^u\alpha^u_t \rangle\\
       &\quad  + \int_U \langle  A^u\bar X^u_t +  \int_UG_{  A}(u,w)\bar X^w_t\d w + B^u\E[\alpha^u_t], \bar K_t(u,v)  \bar X^v_t \rangle \d v  \\
    &\quad + \int_U\langle\bar X^u_t, \dot{\bar K}_t(u,v)\bar X^v_t \rangle\d v  + \int_U \langle \bar X^u_t, \bar K_t(u,v)\left( A^v\bar X^v_t +   \int_UG(v,w)\bar X^w_t\d w + B^v\E[\alpha^v_t] \right) \rangle \d v\\
    & \quad + \langle X^u_t,Q^u X^u_t\rangle + \int_U\langle \bar X^u_t, G_Q(u,v)\bar X^v_t\rangle\d v + \langle \alpha^u_t, R^u\alpha^u_t\rangle
    \bigg]\d u \d t.
\end{align}
\end{small}
Next, we want to rewrite the previous expression in a  suitable form, based on the following observations:
\begin{itemize}
    \item via matrix transpositions, we rewrite all terms of the type $\langle (\ldots) X_t^u, (\ldots)  X_t^u\rangle $ in the form $\langle (\ldots)X_t^u,X_t^u \rangle$;
    \item we can write all the terms depending on $\bar X$ in the form $\langle (\dots)\bar X^v_t,\bar X^u_t \rangle$. For example:
    \begin{align}
        &\int_U\int_U\int_U \langle K^u_t  G_{C}(u,v)\bar X^v_t,   G_{C}(u,w)\bar X^w_t\rangle\d w\d v\d u\\
        &= \int_U\int_U\int_U \langle K^w_t  G_{C}(w,v)\bar X^v_t, G_{C}(w,u)\bar X^u_t\rangle\d w\d v\d u\\
        & = \int_U\int_U\int_U\langle G_{C}(w,u)^T K^w_t  G_{C}(w,v)\bar X_t^v, \bar X^u_t\rangle\d w\d v\d u;
    \end{align}
    \item we can keep only linear terms in $\alpha$ (and not $\E[\alpha]$) observing that $\E[\psi_t\trans\E[\alpha^u_t]] = \E[\E[\psi_t\trans]\alpha_t^u]$;
    \item {on the right hand side of the above equation, using \eqref{eq:symmetric_structure_graphon_cost_term_rem}, we substitute $G_Q$ with $G^S_Q$. This step ensures that the abstract Riccati equation that will be derived satisfies a suitable symmetry property.}
\end{itemize}
Proceeding in this way, we rewrite the previous expression as
     \begin{align}
    &\d \bigg( \mathcal{V}_t + \int_0^t\int_U\E\bigg[ \langle X^u_s,Q^uX^u_s \rangle + \langle \bar X^u_s,  \int_UG^S_Q(u,v)\bar X^v_s\d v\rangle + \langle \alpha^u_s,R^u\alpha^u_s \rangle \bigg]\d u\d s \bigg)\\
    & = \int_U\E\bigg[ \langle (\dot K^u_t + (A^u)^TK^u_t + K^u_tA^u + (C^u)^TK^u_tC^u + Q^u)X^u_t,X^u_t\rangle \bigg]\d u\\
    &\quad  + \int_U\int_U\int_U\E\bigg[ \langle(\dot{\bar K}_t(u,v) + G_{  A}(u,v)K^u_t  + G_{  A}(v,u)^TK^v_t + (C^u)^TK^u_t  G_{C}(u,v)\\
    &\quad + (G_{C}(v,u))^TK^v_tC^v + G_{C}(w,u)^TK^w_t  G_{C}(w,v)\\
    &\quad + (A^u)^T\bar K_t(u,v) + \bar K_t(u,v)A^v + \bar K_t(u,w)    G_{  A}(w,v) + (G_{  A}(w,u))^T \bar K_t(w,v)\\
    &\quad + {G^S_Q(u,v)})\bar X^v_t,\bar X^u_t\rangle \bigg]\d w\d v\d u \d t\\
    &\quad + \int_U\E\bigg[ 2\bigg\langle ((B^u)^TK^u_t + (D^u)^TK^u_tC^u)X^u_t + \int_U\big((B^u)^T \bar K_t(u,v)  \\
     &\quad \quad \quad+ (D^u)^TK^u_t  G_{C}(u,v)\big)\bar X^v_t\d v, \alpha^u_t \bigg\rangle \\
     &\quad\quad\quad\quad+ \langle \alpha^u_t, \bigg(R^u + (D^u)^TK^u_tD^u\bigg)\alpha^u_t\rangle \bigg]\ d u \\
    &= \int_U\E\left[\langle (\dot K^u_t + \Phi_t^u)X^u_t,X^u_t\rangle\right]\d u \d t+ \int_U\int_U\E\left[ \langle(\dot{\bar K}_t(u,v) + \Psi_t(u,v))\bar X^v_t,\bar X^u_t\rangle\right]\d v\d u \d t+ \int_U\E[\chi_t(\alpha^u_t)]\d u \d t,
\end{align}
{where for $\kappa$ $\in$ $\S^d_+$:
\begin{align}\label{not_K}
\Phi^u(\kappa) &= \; (A^u)\trans \kappa  + \kappa A + (C^u)\trans \kappa  C^u + Q^u \\
U^u(\kappa) & = \; (B^u)\trans \kappa + (D^u)\trans \kappa C^u \\
O^u(\kappa) & = \; R^u + (D^u)\trans \kappa  D^u,
\end{align}
for $\bar\kappa\in L^2(U\times U;\R^{d\times d})$:
\begin{align}\label{not_barK}
\Psi(K_t,\bar\kappa)(u,v) &= \;   K^u_tG_{A}(u,v)  + G_{A}(v,u)\trans K^v_t + (C^u)\trans K^u_t  G_{C}(u,v) \\
    &\qquad\qquad + \;  G_{C}(v,u)\trans K^v_tC^v+ \int_UG_{C}(w,u)\trans K^w_t  G_{C}(w,v)\d w\\
    &\qquad\qquad + \;  (A^u)\trans \bar \kappa(u,v) + \bar \kappa(v,u)\trans A^v + \int_U\bar \kappa(w,u)\trans  G_{A}(w,v)\d w \\ 
    &\qquad\qquad + \; \int_U G_{A}(w,u)\trans \bar\kappa(w,v)\d w + G^S_Q(u,v), \\
V(K_t,\bar\kappa)(u,v) &= \; (B^u)\trans\bar\kappa(u,v) + (D^u)\trans K_t^u G_C(u,v).
\end{align}
To ease the notation, we set: 
\begin{align}\label{eq:notations_Phi}
&\Phi_t^u \; = \;  \Phi^u(K^u_t), \quad U_t^u \; = \; U^u(K_t^u), \quad O_t^u \; = \; O^u(K_t^u), \\
&\Psi_t(u,v) \; = \; \Psi(u,v)(K_t,\bar K_t), \quad V_t(u,v) \; = \; V(u,v)(K_t,\bar K_t),\quad 
\Gamma_t^u \; = \; \Gamma^u(K_t,Y_t),\\
& \chi_t(\alpha^u_t) := 2\langle U^u_tX^u_t + \int_UV_t(u,v)\bar X^v_t\d v,\alpha^u_t \rangle + \langle\alpha^u_t,O^u_t\alpha^u_t\rangle.
\end{align}
}
We  now complete the square with respect to $\alpha$:
\begin{align}
    \int_U\E\left[ \chi_t(\alpha^u_t)\right]\d u =& \int_U\E\bigg[ \bigg\langle O^u_t\bigg(\alpha^u_t+(O^u_t)^{-1}U^u_tX^u_t + (O^u_t)^{-1}\int_UV_t(u,v)\bar X^v_t\d v \bigg),\\
    &\alpha^u_t+(O^u_t)^{-1}U^u_tX^u_t + (O^u_t)^{-1}\int_UV_t(u,v)\bar X^v_t\d v \bigg\rangle\\
    & - \langle U^u_tX^u_t, (O^u_t)^{-1}U^u_tX^u_t\rangle - \int_U\langle U^u_t\bar X^u_t, (O^u_t)^{-1}V_t(u,v)\bar X^v_t\rangle\d v\\
    & -\int_U\langle V_t(v,u)\bar X^u_t,(O^u_t)^{-1}U^v_t\bar X^v_t\rangle\d v - \int_U\int_U\langle V_t(w,v)\bar X^v_t, (O^w_t)^{-1}V_t(w,u)\bar X^u_t\rangle\d w\d v \bigg]\d u.
\end{align}
Substituting this in the previous equation,  we obtain:
\begin{equation}\label{eq:derivation_riccati}
\begin{aligned}
&\d \left( \mathcal{V}_t + \int_0^t\int_U\E\left[ \langle X^u_s,Q^uX^u_s \rangle + \langle \bar X^u_s,  \int_UG_Q(u,v)\bar X^v_s\d v\rangle + \langle \alpha^u_s,R^u\alpha^u_s \rangle \right]\d u\d s \right)\\
    &=\int_U\E\bigg[\langle(\dot K^u_t + \Phi_t^u - (U^u_t)^T(O^u_t)^{-1}U^u_t)X^u_t,X^u_t\rangle\bigg]\d u \d t +\\
    &\qquad\int_U\int_U\E\big[ \langle(\dot{\bar K}_t(u,v) + \Psi_t(u,v) - (U^u_t)^T(O^u_t)^{-1}V_t(u,v) -V_t(v,u)^T(O^v_t)^{-1}U^v_t \\
    &\qquad\qquad\qquad\qquad - \int_U V_t(w,u)^T(O^w_t)^{-1}V_t(w,v))\bar X^v_t,\bar X^u_t\rangle \big]\d v\d u \d t\\
    &\quad+ \int_U\E\big[\langle O^u_t(\alpha^u_t +  (O^u_t)^{-1}U^u_tX^u_t + (O^u_t)^{-1} \int_U V_t(u,v)\bar X^v_t \d v),\\
    &\qquad\qquad\alpha^u_t + (O^u_t)^{-1}U^u_tX^u_t + \int_U (O^u_t)^{-1}V_t(u,v)\bar X^v_t \d v \rangle\big] \d u \d t.
\end{aligned}
\end{equation}

{Hence, by considering the following} new system of four infinite dimensional equations related to our control problem: two equations are of Riccati type and two are linear, {we will be able state  the {\it so called} fundamental relation of the linear quadratic optimal control problem (see Proposition \ref{prop:fundamental_rel}), i.e. the decomposition of the cost functional $J$ into the sum of a quadratic term in the control and a constant term,} which then yields the optimal control.


\paragraph{Standard Riccati.} We start by introducing the system of Riccati equations: 
\begin{align}\label{eq:riccati_Ku}
    &\dot K^u_t + \Phi^u(K_t^u) - U^u(K_t^u)\trans O^u(K^u_t)^{-1}U^u(K^u_t) =0, \\
    &K^u_T = H^u, \qquad\textit{for a.e. }u \in U;
\end{align}
Notice that for each $u$ $\in$ $U$, the Riccati equation for $K^u$ in \eqref{eq:riccati_Ku} is standard. We say that a solution to \eqref{eq:riccati_Ku} is a function $K\in C^1([0,T];L^\infty(U;\S^d_+))$ such that \eqref{eq:riccati_Ku} holds for every $t\in[0,T]$.

{
\begin{Remark}\label{remark:invO}
    Observe that, for all $\kappa\in\S^d_+$, the inverse of $O^u(\kappa)$ is well defined for almost every $u\in U$ with $O^u(\kappa)^{-1}\le M$, for some positive constant $M$ independent of $u$, thanks to \eqref{ass:posR}.
\end{Remark}
}

\paragraph{Abstract Riccati.} Given a solution $K$ to \eqref{eq:riccati_Ku} (see next section), we  introduce the following abstract Riccati equation 
on the Hilbert space $L^2(U \times U;\mathbb R^{d\times d})$: 
\begin{equation}\label{eq:abstract_riccati_barK}
    \dot{\bar K}_t+ F(t,\bar K_t)=0, \quad \bar K_T= {G^S_H} ,
\end{equation}
where $F \colon [0,T] \times L^2(U \times U;\mathbb R^{d\times d}) \to L^2(U \times U;\mathbb R^{d\times d})$ is defined by 
\begin{align}
F(t,\bar\kappa)(u,v)&:= \Psi(u,v)(K_t,\bar\kappa) - U^u(K^u_t)\trans O^u(K^u_t)^{-1}V(u,v)(K_t,\bar\kappa) \\
&\qquad - \;  V(v,u)(K_t,\bar\kappa)\trans O^v(K^v_t)^{-1}U^v(K^v_t) \\
&\qquad - \int_U V(w,u)(K_t,\bar\kappa)\trans O^w(K^w_t)^{-1}V(w,v)(K_t,\bar\kappa){\d w}, \label{defF} 
\end{align}
for $\bar\kappa$ $\in$ $L^2(U \times U;\mathbb R^{d\times d})$, $t \in [0,T]$, for a.e. $u,v \in U$ {and we have used notation \eqref{eq:GS}}.
A solution to \eqref{eq:abstract_riccati_barK} is a function  $\bar K$ $\in$ $C^1([0,T];L^2(U \times U;\mathbb R^{d\times d}))$ such that  \eqref{eq:abstract_riccati_barK} holds for every $t \in [0,T]$. 
Note that  \eqref{eq:abstract_riccati_barK} is not an operator-valued Riccati equation as the ones typically considered in the classical literature on Hilbert spaces \cite{tessitore1992, guattes2005,hutang2022}.

\begin{Remark}\label{rem:properties_F}
Notice that  the function $F$ in \eqref{defF} 
satisfies the following properties:
\begin{enumerate}[(i)]
    \item   $|\mathbf F(t,\bar\kappa)|_{\Lc} \leq M_T (1+{|\bar {\boldsymbol\kappa}|_{\Lc}+|\bar {\boldsymbol\kappa}|_{\Lc}^2}), \quad   t \in [0,T], \bar\kappa \in L^2(U \times U;\mathbb R^{d \times d})$,
   {where $\mathbf F(t,\bar\kappa), \bar {\boldsymbol\kappa} $ denote the integral operator associated to the kernel $F(t,\bar\kappa), \bar \kappa$, respectively (recall notation \eqref{eq:Hilbert-schmidt_integral}).}
    \item $F$ is continuous on $[0,T] \times L^2(U \times U;\mathbb R^{d\times d});$
  \item For all $t \in [0,T]$, the map $F(t,\cdot ) \colon L^2(U \times U;\mathbb R^{d\times d}) \to L^2(U \times U;\mathbb R^{d\times d})$ is locally Lipschitz continuous, uniformly in $t$; {Moreover, the stronger following condition holds: there exists $M_T>0$} such that $\forall t \in [0,T], \bar\kappa,\bar\kappa' \in L^2(U \times U;\mathbb R^{d \times d}),$ it holds
  $$|F(t,\bar\kappa)-F(t,\bar\kappa')|_{L^2_{U\times U}} \leq {M(1+|\bar {\boldsymbol\kappa}|_{\Lc}+|\bar {\boldsymbol\kappa}'|_{\Lc})} |\bar\kappa-\bar\kappa'|_{L^2_{U\times U}}.$$
  \item {$F(t,\bar\kappa)(v,u)\trans = F(t,\bar\kappa\trans)(u,v)$},  for all $t\in [0,T], \bar\kappa \in L^2(U \times U;
    \mathbb R^{d \times d}),  (u,v)\in U\times U$. 
\end{enumerate}
All these properties follow from the {$L^2$-regularity of all the coefficients, the regularity of $K_t^u$, and the fact that for the quadratic term  in $\bar\kappa$, i.e. $\int_U\bar\kappa(u,w)\bar\kappa(w,v)\d w$, we have the following estimates. In particular  to prove (i),  by defining  the operator associated to the quadratic term in $\bar\kappa$, we have 
\begin{align*}
     (\boldsymbol{T_{\bar \kappa}} f)(u):&=\int_U\left(\int_U\bar\kappa(u,w)\bar\kappa(w,v)\d w\right)f(v)\d v\\
     &=\int_U \bar\kappa(u,w)\int_U\bar\kappa(w,v)f(v)\d v\d w = (\boldsymbol{\bar\kappa}  \circ \boldsymbol{\bar \kappa} ) (f)(u),\quad   f \in L^2(U;\mathbb R^d).
\end{align*}
Then  (i) follows from the estimate 
\begin{align}\label{eq:est_TkleqkkL2}
    \left|\boldsymbol{T_{\bar \kappa}} \right|_{\Lc}\le \|\boldsymbol{\bar \kappa} \|^2_{\Lc}.
\end{align}
Instead (iii) follows from the fact that for the quadratic term in $\bar\kappa$, we have an estimate of the type
\begin{align}
\left |\int_U \bar\kappa^1(\cdot,w) \bar\kappa^2(w,\cdot) \d w \right |_{L^2(U \times U;\mathbb R^{d \times d})} &=\int_{U}\int_{U}\left |\int_U \bar\kappa^1(u,w) \bar\kappa^2(w,v) \d w \right |^2dudv\\
&\leq \|\boldsymbol{\bar \kappa}^1 \|_{\Lc} \int_U \int_U  |\bar \kappa^2(u,v)|^2 dudv=\|\boldsymbol{\bar \kappa}^1 \|_{\Lc}  |\bar\kappa^2|_{L^2_{U \times U}},
\end{align}
for all $\bar \kappa^1, \bar \kappa^2 \in L^2(U \times U;\mathbb R^{d\times d})$, from which we derive an estimate of the form
\begin{align}
&\left |\int_U \bar\kappa(\cdot,w) \bar\kappa(w,\cdot)-\bar\kappa'(\cdot,w) \bar\kappa'(w,\cdot)  \d w \right |_{L^2_{U\times U}}\\
&\leq \left |\int_U \bar\kappa(\cdot,w) [\bar\kappa(w,\cdot)-\bar\kappa'(w,\cdot)]  \d w \right |_{L^2_{U\times U}}+\left |\int_U [\bar\kappa(\cdot,w)-\bar\kappa'(\cdot,w)] \bar\kappa'(w,\cdot)  \d w \right |_{L^2_{U\times U}}\\
&\leq \left(\|\boldsymbol{\bar \kappa} \|_{\Lc} +\|\boldsymbol{\bar \kappa}' \|_{\Lc} \right)   |\bar\kappa'-\bar\kappa|_{L^2_{U \times U}}
\end{align}
}
{
For point (iv), we show the property for the term $\Psi$, the other terms can be treated in an analogous way. Let us consider
\begin{align}
    \Psi(K_t,\bar\kappa)(v,u)\trans &= \;   G_{A}(v,u)\trans (K^v_t)\trans + (K^u_t)\trans G_{A}(u,v) + \;  G_{C}(v,u)\trans (K^v_t)\trans C^v  \\
    &\qquad\qquad +  (C^u)\trans (K^u_t)\trans  G_{C}(u,v) + \int_UG_{C}(w,u)\trans (K^w_t)\trans  G_{C}(w,v)\d w\\
    &\qquad\qquad + \;   \bar \kappa(u,v) A^v +  (A^u)\trans\bar \kappa(v,u)\trans + \int_U  G_{A}(w,u)\trans \bar \kappa(w,v)\d w \\ 
    &\qquad\qquad + \; \int_U  \bar\kappa(w,u)\trans G_{A}(w,v)\d w + G^S_Q(v,u)\trans\\
    &= G_{A}(v,u)\trans K^v_t + K^u_t G_{A}(u,v) + \;  G_{C}(v,u)\trans K^v_t C^v  \\
    &\qquad\qquad +  (C^u)\trans K^u_t  G_{C}(u,v) + \int_UG_{C}(w,u)\trans K^w_t  G_{C}(w,v)\d w\\
    &\qquad\qquad + \;  (A^u)\trans \bar \kappa(v,u)\trans + \bar \kappa(u,v) A^v + \int_U\bar \kappa(w,u)\trans  G_{A}(w,v)\d w \\ 
    &\qquad\qquad + \; \int_U G_{A}(w,u)\trans \bar\kappa(w,v)\d w + G^S_Q(u,v)\\
    & = \Psi(K_t,\kappa\trans)(u,v)
\end{align}
where we have used the fact that $K^u\in\S^d$ for all $u\in U$ in the second equality.
}
\end{Remark}


\begin{Remark}\label{rem:symbarK}
\begin{enumerate}[(i)]
    \item From \cite{zeidler} with Remark \ref{rem:properties_F}, we know that  a solution to \eqref{eq:abstract_riccati_barK} (when it exists, see next section)  is unique. 
\item We notice  that, if $\bar K_t$ is a solution to \eqref{eq:abstract_riccati_barK}, then it satisfies
\begin{align} 
\bar K_t(v,u)\trans &= \; \bar K_t(u,v), \quad \forall t\in [0,T], (u,v)\in U\times U.
\end{align} 
{Indeed exchanging $u$ and $v$ in \eqref{eq:abstract_riccati_barK},  taking the transpose, we have that 
\begin{align}
0= ( \dot{\bar K}_t(v,u)+ F(t,\bar K_t)(v,u))\trans=  \dot{\bar K}_t\trans(v,u)+ F(t,\bar K_t)\trans(v,u) = \dot{\bar K}_t\trans(v,u)+ F(t,\bar K_t\trans)(u,v),
\end{align}
where we have used  Remark \ref{rem:properties_F} (iv).} Then we have that $\bar K_t(v,u)\trans$ satisfies the same equation \eqref{eq:abstract_riccati_barK}. Then the claim follows from uniqueness of solutions to \eqref{eq:abstract_riccati_barK}.
\end{enumerate}
\end{Remark}

\paragraph{Linear equations.} Given $K,\bar K$ solution to \eqref{eq:riccati_Ku}, \eqref{eq:abstract_riccati_barK},  let us introduce the  terminal value problem on $L^2(U;\R^d)$:
\begin{align}\label{eq:lineqY_L2}
    \dot Y_t + \tilde F(t,Y_t) = 0,\quad Y_T = 0,
\end{align}
where $\tilde F : [0,T]\times L^2(U;\R^d)\to L^2(U;\R^d)$ is defined by 
\begin{align}
    \tilde F^u(t,y) &= \;  (A^u)\trans y^u + \int_U G_{A}(v,u)\trans y^v\d v +  2K^u_t \beta^u + 2(C^u)\trans K^u_t\gamma^u \\
    & \quad  + \;  2\int_U G_{C}(v,u)\trans K^v_t\gamma^v\d v + 2 \int_U \bar K_t(u,v)\beta^v\d v\\
    & \quad - \;  U^u(K^u_t)\trans O^u(K^u_t)^{-1}\Gamma^u(K_t,y) - \int_U 
    V(v,u)(K_t,\bar K_t)\trans O^v(K^v_t)^{-1}\Gamma^u(K_t,y)\d v,
\end{align}
for all $t\in[0,T],y\in L^2(U,\R^d)$, for a.e. $u\in U$, with 
\begin{align}
\Gamma^u(K_t,y) &= \; (D^u)\trans K_t^u \gamma^u + (B^u)\trans y^u.     
\end{align}
Equation \eqref{eq:lineqY_L2} is a standard linear ordinary differential equation (ODE) on the Hilbert space $L^2(U;\R^d)$, and admits a unique solution $Y$ $\in$ $C^1([0,T];L^2(U;\R^d))$, see details in the next section.  Finally, we introduce the linear equation 
\begin{align}\label{eq:lineqLambda}
    &\dot \Lambda^u_t + \langle K^u_t\gamma^u,\gamma^u\rangle + \langle Y^u_t,\beta^u\rangle 
    - \langle \Gamma^u_t(K_t,Y_t), O^u(K_t^u)^{-1}\Gamma^u_t(K_t,Y_t)\rangle \; = \;  0,\\
    &\Lambda^u_T \; = \; 0, \qquad\textit{for a.e. }u \in U,
\end{align}
which admits a unique solution $\Lambda$ $\in$ $C^1([0,T];L^\infty(U;\R))$.


\begin{Remark}\label{solnulle}
    If $\beta=\gamma=0$, the solutions to the linear equations \eqref{eq:lineqY_L2} and \eqref{eq:lineqLambda} are 
    $Y$ $\equiv$ $\Lambda$ $\equiv$ $0$.  
\end{Remark}

\begin{Remark}\label{rem:system_of_differential_equations}
\begin{enumerate}[(i)]
\item The system of equations \eqref{eq:riccati_Ku}-\eqref{eq:abstract_riccati_barK}-\eqref{eq:lineqY_L2}-\eqref{eq:lineqLambda}   has a triangular form, i.e. the equation for $K$ is independent of $\bar K, Y, \Lambda$; the equation for $\bar K$ depends on $K$ but it is independent of $Y, \Lambda$; the equation for $Y$ depends on $K, \bar K$ but it is independent of $\Lambda$; finally, the equation for $\Lambda$ depends on $K, \bar K, Y$.
\item Equations \eqref{eq:riccati_Ku}-\eqref{eq:abstract_riccati_barK}-\eqref{eq:lineqY_L2}-\eqref{eq:lineqLambda} are infinite dimensional; however, \eqref{eq:riccati_Ku}-\eqref{eq:lineqLambda} are decoupled in $u$.
 \item  The Riccati equations \eqref{eq:riccati_Ku}, \eqref{eq:abstract_riccati_barK} are independent of $\beta,\gamma$.
\end{enumerate}
\end{Remark}

\vspace{2mm}

The next Section \ref{sec:existence} is devoted to the existence (and uniqueness) of a solution to the system of equations \eqref{eq:riccati_Ku}-\eqref{eq:abstract_riccati_barK}-\eqref{eq:lineqY_L2}-\eqref{eq:lineqLambda}.

\vspace{2mm}

We end this section by stating the {\it so-called} fundamental relation of the optimal control problem. Such result states  that  we  can decompose  the functional $J_{}(t,\xi,\alpha)$ into a  constant  part (not depending on the control) and a quadratic non-negative part,  and  therefore will be crucial to solve the optimal control problem  (see Section  \ref{sec:verif_thm}).

\begin{Proposition}[Fundamental relation]\label{prop:fundamental_rel}
Let $K,\bar K,Y,\Lambda$ be solution to the system \eqref{eq:riccati_Ku}, \eqref{eq:abstract_riccati_barK}, \eqref{eq:lineqY_L2}, \eqref{eq:lineqLambda}. Then, given $t \in [0,T]$, $\xi$ $\in$ $\Ic$, for all $\alpha$ $\in$ $\Ac$ we have 
\begin{small}
\begin{align}
J_{}(t,\xi,\alpha) & = \;  \mathcal{V}(t,\xi)   + \int_t^T  \int_U\E\Big[\Big\langle O^u_s\Big(\alpha^u_s + (O^u_s)^{-1}\big(U^u_sX^u_s + \int_U V_s(u,v)\bar X^v_s \d v + \Gamma_s^u\big)\Big),   \\
& \qquad \qquad  \qquad \qquad \quad \quad \alpha^u_s + (O^u_s)^{-1}\big(U^u_sX^u_s + \int_U V_s(u,v)\bar X^v_s \d v + \Gamma_s^u \big)\Big\rangle \Big]\d s, \label{eq:fundamental_relation}
\end{align}
\end{small}
where $\Vc : [0,T]\times\Ic \to \R$ is defined by
\begin{align} \label{def_v}
    \mathcal{V}(t,\xi) & := \int_U\E[\langle \xi^u, K^u_t \xi^u \rangle]\d u 
    + \int_U\int_U\langle\bar \xi^u,\bar K_t(u,v)\bar \xi^v\rangle\d v \d u \\
    &\qquad\qquad\qquad\qquad + \; 2 \int_U \E[\langle  Y^u_t,\xi^u\rangle] \d u + \int_U \Lambda^u_t\d u.
\end{align} 
\end{Proposition}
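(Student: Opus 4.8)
The plan is to apply It\^o's formula to the process $s\mapsto\Vc_s$ defined (at a generic time $s$) exactly as in \eqref{def_v} with $\xi$ replaced by $X_s$, then integrate the resulting drift against the running cost and use the four equations \eqref{eq:riccati_Ku}, \eqref{eq:abstract_riccati_barK}, \eqref{eq:lineqY_L2}, \eqref{eq:lineqLambda} to cancel all the non-control terms. Concretely, first I would reduce to the case $\beta=\gamma=0$: by Remark \ref{solnulle} the linear equations give $Y\equiv\Lambda\equiv 0$ in that case, and for the general case one carries the extra affine terms through; the computation leading to \eqref{eq:derivation_riccati} already did the bulk of this in the $\beta=\gamma=0$ setting, so the proof is essentially an integration of that identity plus the bookkeeping of the $Y$ and $\Lambda$ contributions. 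The key point is that, after completing the square in $\alpha$, the drift of $\Vc_s + \int_t^s (\text{running cost})$ decomposes into three groups: a quadratic form in $X^u_s$ with coefficient $\dot K^u_s + \Phi^u_s - (U^u_s)\trans(O^u_s)^{-1}U^u_s$, a quadratic form in $\bar X^v_s,\bar X^u_s$ with coefficient $\dot{\bar K}_s(u,v) + F(s,\bar K_s)(u,v)$ (using the symmetrization \eqref{eq:symmetric_structure_graphon_cost_term_rem} and Remark \ref{rem:symbarK}(ii) to identify the symmetric part), linear-in-$X$ and constant terms involving $\dot Y_s$, $\dot\Lambda_s$, and finally the completed square $\langle O^u_s(\alpha^u_s + \dots), \alpha^u_s + \dots\rangle$.

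Next I would invoke the four equations: \eqref{eq:riccati_Ku} kills the $X^u_sX^u_s$ term, \eqref{eq:abstract_riccati_barK} kills the $\bar X^u_s\bar X^v_s$ term, \eqref{eq:lineqY_L2} kills the terms linear in $X^u_s$ (here one must carefully match $\tilde F^u$ against the coefficient of $\langle Y^u_s, X^u_s\rangle$ arising from $\d(2\langle Y^u_s,X^u_s\rangle)$, including the cross terms $\langle Y^u_s, \int_U G_A(u,v)\bar X^v_s\d v\rangle$ which, after the by-now-standard relabelling $u\leftrightarrow v$, become the $\int_U G_A(v,u)\trans y^v\d v$ contribution to $\tilde F^u$, and the $\Gamma^u$ cross-terms from completing the square with the affine part), and \eqref{eq:lineqLambda} kills the constant term. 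What remains, after integrating from $t$ to $T$, is
\begin{align}
\Vc_T - \Vc_t + \int_t^T \int_U \E\big[\langle X^u_s,Q^uX^u_s\rangle + \langle \bar X^u_s, \textstyle\int_U G_Q(u,v)\bar X^v_s\d v\rangle + \langle \alpha^u_s,R^u\alpha^u_s\rangle\big]\d u\, \d s \notag\\
= \int_t^T \int_U \E\big[\langle O^u_s(\alpha^u_s + (O^u_s)^{-1}(U^u_sX^u_s + \textstyle\int_U V_s(u,v)\bar X^v_s\d v + \Gamma^u_s)), \alpha^u_s + \cdots\rangle\big]\d u\, \d s. \notag
\end{align}
Then I use the terminal conditions $K_T = H$, $\bar K_T = G^S_H$, $Y_T = 0$, $\Lambda_T = 0$ to identify $\Vc_T$ with the terminal cost $\int_U\E[\langle X^u_T, H^uX^u_T\rangle]\d u + \int_U\int_U\langle\bar X^u_T, G^S_H(u,v)\bar X^v_T\rangle\d v\,\d u$, which by \eqref{eq:symmetric_structure_graphon_cost_term_rem} equals the $G_H$-version appearing in $J$; rearranging and recognizing $\Vc_t = \Vc(t,\xi)$ (since $X_t = \xi$) gives \eqref{eq:fundamental_relation}.

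A few technical points need care. One must justify that It\^o's formula applies termwise: the $K^u$ and $\bar K$ terms are handled by the scalar It\^o formula for a.e.\ $u$ followed by Fubini (as the authors note, this avoids Wasserstein derivatives), using the integrability $\int_U \E[\sup_s|X^u_s|^2]\d u <\infty$ from Theorem \ref{exuniqsol} together with $K\in C^1([0,T];L^\infty)$, $\bar K\in C^1([0,T];L^2_{U\times U})$, and the Hilbert--Schmidt bound \eqref{eq:Hilbert-schmidt_integral} to control $\int_U\int_U |\bar K_t(u,v)|\,|\bar X^u_t||\bar X^v_t|\d u\,\d v$; the cross terms $\int_U\langle \bar X^u_s, \bar K_s(u,w)\int_U G_A(w,v)\bar X^v_s\d v\rangle\d w$ require iterated applications of \eqref{eq:Hilbert-schmidt_integral} and Cauchy--Schwarz. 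The main obstacle is the bookkeeping in the $\bar X$ quadratic term: one must verify that the coefficient obtained from It\^o plus complete-the-square is exactly $\dot{\bar K}_s(u,v) + F(s,\bar K_s)(u,v)$ and not merely its symmetrization — this is why the substitution of $G_Q$ by $G_Q^S$ in the derivation (the fourth bullet before \eqref{eq:derivation_riccati}) and the symmetry $\bar K_s(v,u)\trans = \bar K_s(u,v)$ from Remark \ref{rem:symbarK}(ii) are essential, since they let us freely symmetrize under the $\int_U\int_U$ integral; this is exactly the content displayed in \eqref{eq:derivation_riccati}, so the real work is checking that the $Y$- and $\Lambda$-terms slot in consistently, for which the triangular structure noted in Remark \ref{rem:system_of_differential_equations}(i) and the identity $\E[\psi\trans\E[\alpha^u]] = \E[\E[\psi\trans]\alpha^u]$ (used to keep only genuinely-linear-in-$\alpha$ terms) are the tools.
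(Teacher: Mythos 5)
Your proposal is correct and follows essentially the same route as the paper: apply It\^o's formula termwise (for a.e.\ $u$, then integrate over $U$) to $\mathcal{V}_s$, complete the square in $\alpha$, integrate the resulting identity \eqref{eq:derivation_riccati} over $[t,T]$, invoke the four equations to cancel the non-control terms, and use the terminal conditions together with \eqref{eq:symmetric_structure_graphon_cost_term_rem} to identify $\mathcal{V}_T$ with the terminal cost. If anything you are more explicit than the paper about the $\beta,\gamma\neq 0$ bookkeeping (the $Y$, $\Lambda$, and $\Gamma$ contributions), which the paper's proof handles only implicitly by reducing to the centered case.
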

\begin{proof}
Recalling the form of $\mathcal{V}$ in \eqref{def_v} (with $Y^u=\Lambda^u=0$), integrating \eqref{eq:derivation_riccati} over $[t,T],$ and using \eqref{eq:riccati_Ku}, \eqref{eq:abstract_riccati_barK},  we have
\begin{align*}
& \int_t^T  \int_U\E\big[\langle O^u_s(\alpha^u_s + (O^u_s)^{-1}U^u_sX^u_s + (O^u_s)^{-1}\int_U V_s(u,v)\bar X^v_s \d v ),\\
&\quad \quad \quad \quad \alpha^u_s + (O^u_s)^{-1}U^u_sX^u_s + (O^u_s)^{-1 }\int_UV_s(u,v)\bar X^v_s \d v\rangle\big]\d u \d s\\
 &= \mathcal{V}_T -\mathcal{V}_t +\int_t^T\int_U\E\bigg[ \langle X^u_s,Q^uX^u_s \rangle + \langle \bar X^u_s,  \int_UG_Q(u,v)\bar X^v_s\d v\rangle + \langle \alpha^u_s,R^u\alpha^u_s \rangle \bigg]\d u\d s \\
 &=\int_U\E\left[ \langle X^u_T, H^uX^u_T\rangle \right]\d u + \int_U\int_U\langle\bar X^u_T, G^S_H(u,v) \bar X^v_T\rangle\d v \d u - \mathcal{V}(t,\xi)\\
& \quad +\int_t^T\int_U\E\bigg[ \langle X^u_s,Q^uX^u_s \rangle + \langle \bar X^u_s,  \int_UG^S_Q(u,v)\bar X^v_s\d v\rangle + \langle \alpha^u_s,R^u\alpha^u_s \rangle \bigg]\d u\d s \\
&=\int_U\E\left[ \langle X^u_T, H^uX^u_T\rangle \right]\d u + \int_U\int_U\langle\bar X^u_T, G_H(u,v) \bar X^v_T\rangle\d v \d u - \mathcal{V}(t,\xi)\\
& \quad +\int_t^T\int_U\E\bigg[ \langle X^u_s,Q^uX^u_s \rangle + \langle \bar X^u_s,  \int_UG_Q(u,v)\bar X^v_s\d v\rangle + \langle \alpha^u_s,R^u\alpha^u_s \rangle \bigg]\d u\d s \\
&= J(t,\xi,\alpha)- \mathcal{V}(t,\xi),
\end{align*}
where we have used the equality $\Vc_t = \Vc(t,\xi)$ and \eqref{eq:symmetric_structure_graphon_cost_term_rem}.
The statement of the proposition follows.
\end{proof}

\section{Solution of the infinite dimensional differential system} \label{sec:existence}

In this section, we solve the system of infinite-dimensional equations   \eqref{eq:riccati_Ku}-\eqref{eq:abstract_riccati_barK}-\eqref{eq:lineqY_L2}-\eqref{eq:lineqLambda}. Since such system has a triangular form (see Remark \ref{rem:system_of_differential_equations}) we will solve the system by proceeding with one equation at a time, starting with \eqref{eq:riccati_Ku} and ending with \eqref{eq:lineqLambda}.
\subsection{Standard Riccati equation}

To ensure existence and uniqueness of a solution of \eqref{eq:riccati_Ku}, we can rely on the classical theory. 
Indeed, for a.e. $u\in U$, equation \eqref{eq:riccati_Ku} is a strong Riccati equation.
We recall that our coefficients are $L^\infty$ functions of $u$, hence they are defined up to a null set. However, we can still look at the Riccati equation for (a.e.) $u\in I$ fixed. 
Indeed, let us fix a choice of representatives $\tilde A,\tilde B,\dots$ of the coefficients $A,B,\dots$, defined out of a null set $\tilde N$, which is the same for all the representatives. 
Then, for all $u\in \tilde N^c$ we can consider the associated strong Riccati equation and find a unique solution $\tilde K^u$.
If we consider a different choice of representatives $\hat A,\hat B,\dots$ defined out of a null set $\hat N$, then $\tilde A^u=\hat A^u,\tilde B^u=\hat B^u,\dots$ for all $u\in\tilde N^c\cap\hat N^u$, and $\tilde N^c\cap\hat N^c$ is a set of measure $1$.
The uniqueness of the solution ensures that $\tilde K^u=\hat K^u$ for all $u\in \tilde N^c\cap\hat N^c$, thus, $\tilde K$ and $\hat K$ belong to the same equivalence class in $L^\infty$.

Thus, using for example \cite[Theorem 7.2]{yong-zhou}, we have that for a.e. $u\in U$, there exists a unique solution $K^u\in C^1([0,T];\S^d_+)$, which is associated 
to the standard linear quadratic stochastic control problem with $G_{  A}=G_{C}= G_Q=G_H\equiv 0$, i.e.
\begin{equation}
    V^u(t,x):=\inf_{\alpha\in\Ac}\E\left[ \int_t^T f^u(\tilde X_s,\alpha_s) \d s + g^u(\tilde X_T)\right],
\end{equation}
with dynamics
\begin{equation}
        \d\tilde X_s = b^u(\tilde X_s,\alpha_s)\d s + \sigma^u(\tilde X_s,\alpha_s)\d {W_s^u}, \quad \tilde X_0 = x,
\end{equation}
where
$ b^u(x,a) := A^ux+B^ua,$ 
$\sigma^u(x,a):= C^ux+D^ua$, $f^u(x,a):=\langle x,Q^ux\rangle + \langle a,R^ua\rangle,$  $g^u(x):= \langle x,H^ux\rangle$.
In this case, $V^u(t,x) = \langle x,K^u_t x\rangle$, 
with $|K_t^u|\leq C_{T,r}$ where $r>0$ is such that $|A^u|, |B^u|, |C^u|,$ $|D^u|, |Q^u|,$ $ |R^u|, |H^u|\leq r.$ 
Therefore, as our coefficients are bounded with respect to $u \in U$, there exists $M_T>0$ such that
\begin{equation}\label{eq:Ku_bounded}
    |K^u_t| \le M_T \quad \forall t \in [0,T].
\end{equation}
{Following the proof of \cite[Theorem 7.2]{yong-zhou}, we recall that, for each $u\in U$, $K^u$ is obtained as the limit of a sequence of functions $(K^i)^u$ which are solutions of linear ODEs.
Explicitly, $K^0 := H^u$, and for all $i\ge 1$ $K^i$ is defined as the solution of the linear equation
\begin{align}
&(\dot K^i_t)^u + (K^i_t)^u(\hat A^{i-1}_t)^u + ((\hat A^{i-1}_t)\trans)^u (K^i_t)^u + ((\hat C^{i-1}_t)\trans)^u (K^i_t)^u(\hat C^{i-1}_t)^u +(\hat Q^{i-1}_t)^u = 0,\\
&(K^i_T)^u = H^u,
\end{align}
where
\begin{align}
    &(\hat A^{i-1}_t)^u = A^u - B^u(\hat\Psi^{i-1}_t)^u, \qquad (\hat C^{i-1}_t)^u = C^u - D^u(\hat\Psi^{i-1}_t)^u,\\
    & (\hat Q^{i-1}_t)^u = ((\hat\Psi^{i-1}_t)\trans)^uR^u(\hat\Psi^{i-1}_t)^u + Q^u\\
    & (\hat\Psi^{i-1}_t)^u = (R^u +(D\trans)^u (K^{i-1}_t)^u D^u)^{-1}((B\trans)^u (K^{i-1}_t)^u + (D\trans)^u (K^{i-1}_t)^u C^u)
\end{align}
We have that, for each $i\in\N$, $(u,t)\mapsto (K^i_t)^u$ is measurable. This can be done by exploiting the standard Picard iteration to obtain solutions of linear ODEs and by arguing that $K^i$ is the limit of measurable functions.
Indeed, we have that:
\[
(K^i_t)^u = \lim_{n\to\infty} (K_t^{i,n})^{u}, \text{ for almost every } u\in U,\text{ for all }t\in[0,T],
\]
where $$(K_t^{i,n})^u := \int_t^T\big[(K^{i,n-1}_s)^u(\hat A^{i-1}_s)^u + ((\hat A^{i-1}_s)\trans)^u (K^{i,n-1}_s)^u + ((\hat C^{i-1}_s)\trans)^u (K^{i,n-1}_s)^u(\hat C^{i-1}_s)^u +(\hat Q^{i-1}_s)^u]\d s$$ and $(K_t^{i,0})^u := K_T^u$ for all $t\in[0,T]$. 
Therefore, since $\sup_{t\in[0,T]}|K^{i,u}_t-K^u_t|\to0$ as $i$ tends to infinity, we can conclude that $(u,t)\mapsto K^u_t$ is measurable. By \eqref{eq:Ku_bounded}, we conclude that $K\in L^\infty(U,\S^d_+)$.}
{To show that $K\in C([0,T];L^\infty(U;\S^d_+))$, i.e. $K$ is continuous in $t$, we need to show that $\esssup_{u\in U}|K_{t_1}^u-K_{t_2}^u| \to 0$ as $t_1\to t_2$. Therefore, let us consider
\begin{align}
    |K_{t_1}^u-K_{t_2}^u| &= \left|\int_{t_1}^{t_2} \left( \Phi^u(K^u_s) - U^u(K^u_s)\trans O^u(K^u_s)^{-1}U^u(K^u_s)\right)\d s\right| \le M|t_1-t_2|,
\end{align}
where $M$ is a positive constant. The last inequality can be obtained thanks to \eqref{eq:Ku_bounded}, to the boundedness of all the coefficients involved, and to Remark \ref{remark:invO}. 
Finally, we show that $\dot K\in C([0,T];L^\infty(U;\S^d_+))$ by observing that $\dot K^u_t$ is given by \eqref{eq:riccati_Ku}
and therefore is the sum and product of functions in $C([0,T];L^\infty(U;\S^d_+))$
}
Therefore, we can conclude that $K\in C^1([0,T];L^\infty(U;\S^d_+))$, so that it is a solution to \eqref{eq:riccati_Ku}.

\subsection{Solution of the infinite dimensional Riccati equation}

In order to solve the infinite dimensional Riccati equation \eqref{eq:abstract_riccati_barK}, we start by proving an a priori estimate on its solutions.
As observed in  Remark \ref{rem:system_of_differential_equations}, equation \eqref{eq:abstract_riccati_barK} is independent of $\beta^u,\gamma^u$; therefore, in this subsection, without loss of generality, we assume 
\begin{align}
\beta^u=\gamma^u=0,\quad \text{for a.e. }u\in U.
\end{align}
This will be useful in deriving the a priori estimate in Proposition \ref{cor:a_priori_estimate_barK}.
\begin{Proposition}[A priori estimate]\label{cor:a_priori_estimate_barK}
Let $T_0 \in [0,T]$ and $\bar K_t$ be a solution  of the abstract  Riccati equation \eqref{eq:abstract_riccati_barK} on $(T_0,T]$. Then there exists $M_T>0$ (independent of $\bar K_t$) such that {\begin{equation}\label{eq:norm_operator_bounded}
    \|\boldsymbol{\bar K}_t\|_{\Lc}\le M_T \quad \forall t \in (T_0,T].
\end{equation}}
\end{Proposition}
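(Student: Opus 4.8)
The key idea is to exploit the fundamental relation (Proposition \ref{prop:fundamental_rel}) together with the non-negativity \eqref{eq:J_geq0} of the cost functional, which holds under assumption \eqref{eq:positivity_hpII}. Since we are working with $\beta=\gamma=0$, by Remark \ref{solnulle} we have $Y\equiv\Lambda\equiv0$, so the value function reduces to $\mathcal{V}(t,\xi)=\int_U\E[\langle\xi^u,K^u_t\xi^u\rangle]\d u+\int_U\int_U\langle\bar\xi^u,\bar K_t(u,v)\bar\xi^v\rangle\d v\d u$. Fix $t\in(T_0,T]$. The plan is to run the problem on the interval $[t,T]$ with a deterministic initial condition: take $\xi^u=x^u$ for a fixed $x\in L^2(U;\R^d)$ (so $\bar\xi^u=x^u$ and the $K^u_t$-term is $\int_U\langle x^u,K^u_t x^u\rangle\d u\ge0$ since $K^u_t\in\S^d_+$), and choose a cheap admissible control, e.g. $\alpha\equiv0$. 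Then the fundamental relation gives $J(t,x,0)=\mathcal{V}(t,x)+(\text{nonneg.\ quadratic term})$, hence
$$\int_U\int_U\langle x^u,\bar K_t(u,v)x^v\rangle\d v\d u=\mathcal V(t,x)-\int_U\langle x^u,K^u_tx^u\rangle\d u\le J(t,x,0).$$
By the estimate \eqref{estimJ} (with $\beta=\gamma=0$, $\alpha=0$) we get $J(t,x,0)\le M_T\int_U\E[|x^u|^2]\d u=M_T|x|^2_{L^2_U}$, and since $K^u_t$ is positive semidefinite the left side lower-bounds $\mathcal V(t,x)-M_T|x|^2_{L^2_U}$ as well; combining, $\langle x,\boldsymbol{\bar K}_tx\rangle_{L^2_U}\le M_T|x|^2_{L^2_U}$ for all real-valued $x\in L^2(U;\R^d)$. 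Together with the symmetry $\bar K_t(v,u)\trans=\bar K_t(u,v)$ from Remark \ref{rem:symbarK}(ii), which makes $\boldsymbol{\bar K}_t$ a bounded self-adjoint operator on $L^2(U;\R^d)$, this one-sided bound does not immediately give the operator norm; so the next step is to obtain a matching lower bound.

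For the lower bound, I would argue by contradiction or, more cleanly, use that $\boldsymbol{\bar K}_t$ is self-adjoint, so $\|\boldsymbol{\bar K}_t\|_{\Lc}=\sup_{|x|=1}|\langle x,\boldsymbol{\bar K}_t x\rangle|$, and we must rule out large negative values of this quadratic form. Here the trick is to run the fundamental relation \emph{at a later time} or with a different initial datum that isolates the mean-field term. Concretely: the state equation with $\alpha\equiv0$ and deterministic initial condition $x$ at time $t$ has a deterministic mean $\bar X^u_s$ solving a linear ODE, and $\bar K_T=G^S_H$ with $\int_U\int_U\langle y^u,G^S_H(u,v)y^v\rangle\d v\d u\ge0$ by \eqref{eq:positivity_hp}/\eqref{eq:positivity_hpII}. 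Differentiating $s\mapsto\int_U\int_U\langle\bar X^u_s,\bar K_s(u,v)\bar X^v_s\rangle\d v\d u$ along the flow and using the abstract Riccati equation \eqref{eq:abstract_riccati_barK}, one recovers exactly the "completed square" identity of \eqref{eq:derivation_riccati}; integrating from $t$ to $T$ and using $J\ge0$ again but now keeping only the mean-field contribution bounds the quadratic form $\int_U\int_U\langle x^u,\bar K_t(u,v)x^v\rangle\d v\d u$ from below by $-M_T|x|^2_{L^2_U}$ (the negative part coming from the terminal cost and running cost, all controlled by \eqref{estimX}). Combining the two bounds, $|\langle x,\boldsymbol{\bar K}_tx\rangle_{L^2_U}|\le M_T|x|^2_{L^2_U}$ uniformly, hence $\|\boldsymbol{\bar K}_t\|_{\Lc}\le M_T$, with $M_T$ depending only on $T$ and the $L^\infty/L^2$-norms of the coefficients (through \eqref{estimJ}, \eqref{eq:Ku_bounded}, Remark \ref{remark:invO}), not on $\bar K_t$ nor on $T_0$.

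The main obstacle is making the lower bound argument rigorous while staying probabilistic: the clean representation $\langle x,\boldsymbol{\bar K}_tx\rangle_{L^2_U}=J(t,x,0)-\int_U\langle x^u,K^u_tx^u\rangle\d u-(\text{nonneg.})$ only gives an upper bound directly, and one genuinely needs the self-adjointness of $\boldsymbol{\bar K}_t$ plus a second use of non-negativity of $J$ (or of the terminal datum) to close the gap — essentially re-deriving the completed-square decomposition \eqref{eq:derivation_riccati} for the mean-field component alone and reading off the sign. A subtlety to handle carefully is that $\mathcal V(t,x)$ equals the value function only once we know \eqref{eq:abstract_riccati_barK} is solvable, which is what we are building toward; but for the a priori estimate we only use the fundamental relation as an \emph{algebraic identity} valid for \emph{any} solution $\bar K$ of \eqref{eq:abstract_riccati_barK} on $(T_0,T]$ (Proposition \ref{prop:fundamental_rel} is stated exactly this way), so no circularity arises. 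One should also note the dependence of $M_T$ on the interval: since \eqref{estimX}--\eqref{estimJ} have constants uniform on $[0,T]$, the bound is uniform in $T_0$, which is precisely what is needed to later extend a local solution of \eqref{eq:abstract_riccati_barK} to all of $[0,T]$.
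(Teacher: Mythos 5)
Your overall strategy (self-adjointness of $\boldsymbol{\bar K}_t$ from Remark \ref{rem:symbarK}, then a two-sided bound on the quadratic form $x\mapsto\int_U\int_U\langle x^u,\bar K_t(u,v)x^v\rangle\,\d v\,\d u$ via the fundamental relation) is exactly the paper's, and your upper bound — take $\alpha\equiv 0$, drop the non-negative square term and the non-negative $K^u_t$-term, and invoke \eqref{estimJ} — is correct and identical to the paper's argument.

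The lower bound, however, has a genuine gap as you describe it. You propose to differentiate $s\mapsto\int_U\int_U\langle\bar X^u_s,\bar K_s(u,v)\bar X^v_s\rangle\,\d v\,\d u$ along the uncontrolled flow ($\alpha\equiv 0$) and "keep only the mean-field contribution." But the Riccati equation \eqref{eq:abstract_riccati_barK} was built so that the signs work out only for the \emph{full} combination $\int_U\E[\langle X_s,K_sX_s\rangle]\,\d u+\int_U\int_U\langle\bar X_s,\bar K_s\bar X_s\rangle+\text{running cost}$. If you differentiate the mean-field part alone, the term $\dot{\bar K}_s=-F(s,\bar K_s)$ injects $+\int_U V(w,\cdot)\trans (O^w)^{-1}V(w,\cdot)\,\d w$ with $V=B\trans\bar\kappa+D\trans K G_C$ depending (quadratically) on the unknown $\bar K$; after integration in $s$ this produces a term of the form $-\int_t^T\int_U|(O^u_s)^{-1/2}(U^u_s\bar X^u_s+\int_U V_s(u,v)\bar X^v_s\,\d v)|^2\,\d u\,\d s$, which is $\le 0$ and has no a priori bound independent of $\bar K$. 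So "using $J\ge 0$ again" with $\alpha\equiv 0$ cannot deliver a lower bound: the completed square appears with the unfavorable sign and is not discardable. The paper closes the gap differently: it evaluates the fundamental relation \eqref{eq:fundamental_relation} at the \emph{feedback} control $\hat\alpha^u_s=-(O^u_s)^{-1}\big(U^u_sX^u_s+\int_UV_s(u,v)\bar X^v_s\,\d v\big)$, where $X$ solves the closed-loop equation \eqref{eq:closed_loop} (well-posed on $(T_0,T]$ since $\bar K$ is there assumed to exist, with bounded continuous-in-time coefficients). This choice makes the square term vanish identically, so \eqref{eq:J_geq0} gives
\begin{align}
0\le J(t,\xi,\hat\alpha)=\int_U\E[\langle\xi^u,K^u_t\xi^u\rangle]\,\d u+\int_U\int_U\langle\bar\xi^u,\bar K_t(u,v)\bar\xi^v\rangle\,\d v\,\d u,
\end{align}
and the lower bound $\int_U\int_U\langle\bar\xi^u,\bar K_t(u,v)\bar\xi^v\rangle\,\d v\,\d u\ge -M_T\int_U\E[|\xi^u|^2]\,\d u$ follows from \eqref{eq:Ku_bounded}. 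Replacing your lower-bound step by this one makes the proof complete; the rest of your write-up (deterministic $\xi=x$, uniformity of $M_T$ in $T_0$, absence of circularity) is fine.
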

\begin{proof}
First, we observe that $\boldsymbol{\bar K}_t$ is a self-adjoint operator, thanks to Remark \ref{rem:symbarK}. Indeed:
\begin{align}
    \langle z, \boldsymbol{\bar K}_t(y)\rangle_{L^2} &= \int_U\int_U(z^u)^T\bar K_t(u,v)y^v\d v\d u
    = \int_U\int_U(z^v)^T\bar K_t(v,u)y^u\d v\d u\\
    &= \int_U\int_U(z^v)^T\bar K_t(u,v)^Ty^u\d v\d u= \langle\boldsymbol{\bar K}_t(z),y\rangle_{L^2}.
\end{align}
 We recall that the norm of self-adjoint operators can be expressed in the following way:
\begin{align}\label{eq:norm_operator_bounded2}
\|\boldsymbol{\bar K}_t \|_{\Lc)}^2 = \sup_{\|f\|_{L^2}\ne 0}\frac{\langle\boldsymbol{\bar K}_t(z),z\rangle_{L^2}}{\|z\|_{L^2}^2}
\end{align}
Now, we estimate the right-hand-side.
\begin{itemize}
\item {Considering \eqref{eq:fundamental_relation}, and noticing that both the square term there and $\int_U\E\left[ \langle \xi^u, K^u_t \xi^u\rangle \right]\d u$ are non-negative, it holds
$J(t,\xi,\alpha) \geq \int_U\int_U\langle\bar \xi^u,\bar K_t(u,v)\bar \xi^v\rangle\d v \d u,$ $ \forall \alpha 
$.
Observe that, when $\beta=\gamma=0$, the solutions to the linear equations \eqref{eq:lineqY_L2},\eqref{eq:lineqLambda} are $Y=\Lambda=0$.
Taking $\alpha=0$ and using \eqref{estimJ} (with $\beta=\gamma=0$), yield
\[
M_T\int_U\E\left[|\xi^u|^2\right]\d u \geq J(t,\xi,0) \geq \int_U\int_U\langle\bar \xi^u,\bar K_t(u,v)\bar \xi^v\rangle\d v \d u.
\]}
    \item {Denoting $\alpha^u_s := -(O^u_s)^{-1}U^u_sX^u_s - (O^u_s)^{-1}\int_UV_s(u,v)\bar X^v_s\d v$, where $X^u_s$ is the unique solution to the closed loop equation \eqref{eq:closed_loop}, 
 from \eqref{eq:fundamental_relation} we have
\[
\int_U\E\left[ \langle \xi^u, K^u_t \xi^u\rangle \right]\d u +\int_U\int_U\langle\bar \xi^u,\bar K_t(u,v)\bar \xi^v\rangle\d v \d u = J(t,\xi,\alpha) \ge 0,
\]
from which we obtain, taking into account also \eqref{eq:Ku_bounded},
\[
\int_U\int_U\langle\bar \xi^u,\bar K_t(u,v)\bar \xi^v\rangle\d v \d u\ge - \int_U\E\left[ \langle \xi^u, K^u_t \xi^u\rangle \right]\d u\ge -M_T\int_U\E\left[|\xi^u|^2\right]\d u.
\]
}
\end{itemize}
{Therefore it holds 
\begin{align*}
\left|\int_U\int_U\langle\bar \xi^u,\bar K_t(u,v)\bar \xi^v\rangle\d v \d u\right|& \le M_T\int_U\E\left[|\xi^u|^2\right]\d u.
\end{align*}
Hence, taking deterministic  random variables $\xi^u=z^u$, we are able to bound \eqref{eq:norm_operator_bounded2}, so that \eqref{eq:norm_operator_bounded} follows. Hence, we have proved the  statement of the proposition.}
\end{proof}


We now solve  the abstract Riccati equation \eqref{eq:abstract_riccati_barK}.

\begin{Proposition}
The abstract Riccati equation \eqref{eq:abstract_riccati_barK} has a unique solution $\bar K_t$ on $[0,T]$. 
\end{Proposition}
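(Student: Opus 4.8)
The strategy is the standard one for Riccati equations: local existence via a fixed-point argument, combined with the a priori estimate of Proposition~\ref{cor:a_priori_estimate_barK} to rule out blow-up and hence extend the local solution to all of $[0,T]$. Uniqueness has already been noted in Remark~\ref{rem:symbarK}(i). Since equation~\eqref{eq:abstract_riccati_barK} is independent of $\beta,\gamma$, we keep the assumption $\beta=\gamma=0$ made in the subsection, so the a priori bound is available.

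\textbf{Step 1: Local existence.} Rewrite \eqref{eq:abstract_riccati_barK} backward in time on $[T-h,T]$ as the integral equation
\begin{equation}
\bar K_t = G^S_H + \int_t^T F(s,\bar K_s)\,\d s,
\end{equation}
and view the right-hand side as a map on the Banach space $C([T-h,T];L^2(U\times U;\R^{d\times d}))$. By Remark~\ref{rem:properties_F}, $F$ is continuous in $(t,\bar\kappa)$ and locally Lipschitz in $\bar\kappa$ uniformly in $t$ (indeed the sharper estimate $|F(t,\bar\kappa)-F(t,\bar\kappa')|_{L^2_{U\times U}}\le M(1+|\bar{\boldsymbol\kappa}|_{\Lc}+|\bar{\boldsymbol\kappa}'|_{\Lc})|\bar\kappa-\bar\kappa'|_{L^2_{U\times U}}$ holds). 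A standard Picard iteration on a ball of radius $2|G^S_H|_{L^2_{U\times U}}$ in that space then gives, for $h$ small enough (depending only on $M_T$ and $|G^S_H|_{L^2_{U\times U}}$), a unique fixed point $\bar K\in C([T-h,T];L^2(U\times U;\R^{d\times d}))$; differentiating the integral equation shows $\bar K\in C^1([T-h,T];L^2(U\times U;\R^{d\times d}))$, so it solves \eqref{eq:abstract_riccati_barK} on $[T-h,T]$. This is essentially the local Cauchy--Lipschitz theory for ODEs on Banach spaces as in \cite{zeidler}.

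\textbf{Step 2: Global extension.} Let $(T_0,T]$ be the maximal interval of existence of the solution started from $G^S_H$ at time $T$, and suppose for contradiction that $T_0>0$. On $(T_0,T]$, Proposition~\ref{cor:a_priori_estimate_barK} gives $\|\boldsymbol{\bar K}_t\|_{\Lc}\le M_T$, and since $\|\boldsymbol{\bar K}_t\|_{\Lc}\le|\bar K_t|_{L^2_{U\times U}}$ does not a priori follow the other way, we must instead bound the $L^2$-norm itself. For this, use the integral equation together with property (i) of Remark~\ref{rem:properties_F}, noting that every term of $F(t,\bar K_t)$ except the quadratic one is controlled in $L^2_{U\times U}$ by $1+\|\boldsymbol{\bar K}_t\|_{\Lc}\le 1+M_T$, while the quadratic term $\int_U\bar\kappa(\cdot,w)\bar\kappa(w,\cdot)\,\d w$ is bounded in $L^2_{U\times U}$ by $\|\boldsymbol{\bar K}_t\|_{\Lc}\,|\bar K_t|_{L^2_{U\times U}}\le M_T|\bar K_t|_{L^2_{U\times U}}$ (the estimate displayed in Remark~\ref{rem:properties_F}). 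Hence $|F(t,\bar K_t)|_{L^2_{U\times U}}\le C_T(1+|\bar K_t|_{L^2_{U\times U}})$ on $(T_0,T]$, and Gronwall's lemma applied to $t\mapsto|\bar K_t|_{L^2_{U\times U}}=|G^S_H + \int_t^T F(s,\bar K_s)\,\d s|_{L^2_{U\times U}}$ yields a bound $|\bar K_t|_{L^2_{U\times U}}\le C_T'$ uniform on $(T_0,T]$. Therefore $\lim_{t\downarrow T_0}\bar K_t$ exists in $L^2(U\times U;\R^{d\times d})$, and restarting the local existence argument of Step~1 from that limit at time $T_0$ extends the solution strictly past $T_0$, contradicting maximality. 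Hence $T_0=0$ and $\bar K\in C^1([0,T];L^2(U\times U;\R^{d\times d}))$ solves \eqref{eq:abstract_riccati_barK}; uniqueness is Remark~\ref{rem:symbarK}(i).

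\textbf{Main obstacle.} The delicate point is Step~2: the a priori estimate in Proposition~\ref{cor:a_priori_estimate_barK} bounds the \emph{operator} norm $\|\boldsymbol{\bar K}_t\|_{\Lc}$, not the Hilbert--Schmidt ($L^2_{U\times U}$) norm in which the fixed-point argument naturally lives, and in general a Hilbert--Schmidt operator cannot be bounded by its operator norm. The resolution is precisely that the quadratic nonlinearity in $F$ is ``mixed'': it is quadratic in the operator norm but only linear in the Hilbert--Schmidt norm (estimate \eqref{eq:est_TkleqkkL2} and the companion bound in Remark~\ref{rem:properties_F}). This structural feature lets the a priori operator-norm bound feed into a \emph{linear} Gronwall inequality for the Hilbert--Schmidt norm, preventing finite-time blow-up. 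Making that interplay precise — i.e. checking that every term of $F$ other than the quadratic one is indeed operator-norm-controlled, and assembling the Gronwall estimate — is where the real work lies; the rest is routine Banach-space ODE theory.
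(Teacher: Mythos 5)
Your proof is correct, but it reaches global existence by a somewhat different route than the paper. The paper (following Guatteri--Tessitore) runs a single fixed-point argument on the set $B(r)$ of curves in $C([T-\delta,T];L^2(U\times U;\R^{d\times d}))$ whose associated integral operators have operator norm at most $r$: the self-map property uses Remark~\ref{rem:properties_F}(i) (an operator-norm bound on $\mathbf F$), the contraction uses Remark~\ref{rem:properties_F}(iii) in the $L^2_{U\times U}$ norm, and -- crucially -- the admissible step size $\delta$ depends on the terminal datum only through its \emph{operator} norm, which Proposition~\ref{cor:a_priori_estimate_barK} bounds uniformly; hence the local construction can be iterated backward with a uniform step and no separate continuation argument is needed. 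You instead do a plain local Picard iteration in an $L^2_{U\times U}$-ball and then handle globality by a maximal-interval/Gronwall argument, converting the a priori operator-norm bound into a uniform $L^2_{U\times U}$ bound via the mixed estimate $\bigl|\int_U\bar\kappa(\cdot,w)\bar\kappa(w,\cdot)\,\d w\bigr|_{L^2_{U\times U}}\le\|\boldsymbol{\bar\kappa}\|_{\Lc}\,|\bar\kappa|_{L^2_{U\times U}}$. Both schemes rest on exactly the same two ingredients (the structure of $F$ in Remark~\ref{rem:properties_F} and the a priori estimate), and your "main obstacle" paragraph correctly identifies why the operator-norm a priori bound suffices. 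The paper's version is slightly more economical because the uniform step size makes the continuation automatic; yours requires the extra Gronwall step but is the more standard ODE template. One small inaccuracy in Step~2: the terms of $F$ that are \emph{linear} in $\bar K_t$ (e.g.\ $(A^u)\trans\bar K_t(u,v)$) are not controlled in $L^2_{U\times U}$ by $1+\|\boldsymbol{\bar K}_t\|_{\Lc}$ alone -- the operator norm does not dominate the Hilbert--Schmidt norm -- but they are trivially bounded by a constant times $|\bar K_t|_{L^2_{U\times U}}$, which still feeds into your linear Gronwall inequality, so the conclusion $|F(t,\bar K_t)|_{L^2_{U\times U}}\le C_T(1+|\bar K_t|_{L^2_{U\times U}})$ and the rest of the argument stand.
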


\begin{proof}{Inspired by the proof of \cite[Proposition 5.8]{guattes2005}, we proceed as follows.
Let $\delta, r >0$. Consider the Banach space $C\left([T-\delta, T] ; L^2(U \times U; \mathbb R^{d \times d})\right)$ with norm $|\bar k|_C^2=\sup _{t \in[T-\delta, T]} |\bar k_t|_{L^2_{U\times U}}^2$ and consider
$$
B(r):=\left\{\bar k \in C\left([T-\delta, T] ; L^2(U \times U; \mathbb R^{d \times d})\right): \sup _{t \in[T-\delta, T]}|\mathbf {\bar k}_t|_{\Lc} \leq r \right\},
$$
where $\mathbf {\bar k}_t$ is the integral operator associated to $\bar k_t$  (recall notation \eqref{eq:Hilbert-schmidt_integral}) . We notice that $B(r)$ is a closed convex subset of $C\left([T-\delta, T] ; L^2(U \times U; \mathbb R^{d \times d})\right)$. As convexity is immediate, we prove that it is closed. Indeed, let $\bar k^n \in B(r), \bar k \in C\left([T-\delta, T] ; L^2(U \times U; \mathbb R^{d \times d})\right)$ such that $|\bar k^n-\bar k|_C \to 0$. Then we have $\bar k \in B(r)$ since (with usual notations for $\mathbf {\bar k}^n$)
$$\sup _{t \in[T-\delta, T]}|\mathbf {\bar k}_t|_{\Lc}\leq \sup _{t \in[T-\delta, T]} |\mathbf {\bar k}_t-\mathbf {\bar k}^n_t|_{\Lc}+\sup _{t \in[T-\delta, T]}|\mathbf {\bar k}^n_t|_{\Lc}\leq |\bar k-\bar k^n|_C+r\xrightarrow{n\to \infty}r.$$ The claim follows.}

{On $B(r)$ we construct a map (recall \eqref{eq:abstract_riccati_barK})
$$
\mathcal A: B(r) \rightarrow B(r), \quad (\mathcal A\bar K)_t= \bar K_T+\int_t^T F\left(s, \bar{K}_s\right)ds, \quad t \in [T-\delta,T].
$$ 
We show first that $\mathcal A$ is well defined for opportune choices of the parameters. Indeed,  denoting $\mathbf{(\mathcal A\bar K)}$ the integral operator associated to $\mathcal {A}\bar K$ (recall notation \eqref{eq:Hilbert-schmidt_integral})  and using Remark \ref{rem:properties_F} (i),  for $M=M_T>0$ (independent of $\delta, r$), we have
\begin{align}
|\mathbf{(\mathcal A\bar K)}_t|_{\Lc} &\leq M \int_{T-\delta}^T(1+ |\mathbf {\bar K}_s|_{\Lc} +|\mathbf {\bar K}_s|_{\Lc} ^2  )ds+ |\mathbf {\bar K}_T|_{\Lc} \\
&\leq M\delta (1+r +r^2 ) + |\mathbf {\bar K}_T|_{\Lc} , \quad t \in [T-\delta,T].
\end{align}
Therefore, taking first $r >  |\mathbf {\bar K}_T|_{\Lc}$ and then $\delta $ such that $M\delta (1+r +r^2 ) + |\mathbf {\bar K}_T|_{\Lc} \leq r$, we have the claim.}

{Finally we show that $\mathcal A$ is a contraction on $B(r)$. Indeed, using  Remark \ref{rem:properties_F} (iii), for $M=M_T>0$ (independent of $\delta,r$) we have
\begin{align}
    \sup_{t \in [T-\delta,T]} |(\mathcal A\bar K')_t- (\mathcal A\bar K)_t|_{L^2} &\leq M \int_{T-\delta}^T (1+|\mathbf {\bar K}_s|_{\Lc}+|\mathbf {\bar K}'_s|_{\Lc}) | \bar K'_s-\bar K_s|_{L^2_{U\times U}} ds\\
    &\leq  \delta M(1+2r) \sup_{t \in [T-\delta,T]} |\bar K'_t- \bar K_t|_{L^2_{U\times U}} .
\end{align}
Then choosing $\delta $ such that $\delta M(1+2r) <1$ (together with the previous condition $M\delta(1+r  +r^2  ) + |\mathbf {\bar K}_T|_{\Lc} \leq r$) we have the claim. 
 } 
 
{ This provides existence and uniqueness of the solution $\bar K_t$ on  $[T-\delta,T]$. Observing that $\delta$ depends on $\bar K_T$ only through $|\bar{\mathbf K}_T|_{\Lc}$ for which we have the a-priori estimate \eqref{eq:norm_operator_bounded}, we can iterate the argument backward in time and get global existence an uniqueness on $[0,T]$.}
\end{proof}
\subsection{Solution of the linear equations}\label{sec:sollineq}

Applying \cite[Corollary 3.8]{zeidler}, we get existence and uniqueness of the solution of \eqref{eq:lineqY_L2}. 
Equation \eqref{eq:lineqLambda} can be solved $u$ by $u$, and  the classical theory for differential equations grants existence and uniqueness.

\section{Optimal feedback controls} 
\label{sec:verif_thm}

In this section, we prove a verification theorem.
In view of the fundamental relation, this suggests that the optimal control satisfies the following feedback form  
\begin{equation}\label{eq:opt-control}
\alpha^u_s = -(O^u_s)^{-1}\Big( U^u_s X^u_s + \int_UV_s(u,v)\bar X^v_s\d v + \Gamma^u_s\Big), 
\end{equation}
and to consider the following closed loop equation (by substituting such $\alpha$ in the state dynamics \eqref{dynamics}):
\begin{small}
\begin{equation}\label{eq:closed_loop}
\begin{aligned}
    &\d X^u_s =\big [ \left (A^u - B^u(O^u_s)^{-1}U^u_s \right ) X^u_s +  \int_U \left(G_{A}(u,v)-B^u (O^u_s)^{-1}V_s(u,v)\right)\bar X^v_s \d v -B^u(O^u_s)^{-1}\Gamma^u_s\big] \d s\\
    & \qquad  + \big [ \left ( C^u-D^u (O^u_s)^{-1}U^u_s \right)  X^u_s+ \int_U \left(G_{C}(u,v)- D^u (O^u_s)^{-1} V_s(u,v)\right)\bar X^v_s\d v -D^u(O^u_s)^{-1}\Gamma^u_s\big ] \d W^u_s,\\ 
    & X_t^u=\xi^u, \quad u \in U.  
\end{aligned}
\end{equation}
\end{small}
Since \eqref{eq:closed_loop} is linear mean-field SDE  with coefficients which are  continuous in time and bounded, by Remark \ref{rem:non-autonomous_coeff},  it admits a unique solution, denoted  $\hat X^u_s$ on $[t,T]$. 
The next result shows the optimality of the control in \eqref{eq:opt-control}.

\begin{Theorem}\label{th:verification}
Let $K_t^u$ be the unique solution of the standard Riccati equation \eqref{eq:riccati_Ku} on $[0,T]$, $\bar K_t$ be the unique solution  of the abstract Riccati equation \eqref{eq:abstract_riccati_barK} on $[0,T]$, $Y,\Lambda$ {be} the unique solutions of the linear equations \eqref{eq:lineqY_L2},\eqref{eq:lineqLambda}, respectively, on $[0,T]$. Then:
\begin{enumerate}
    \item The unique optimal control is given by the feedback control
    \begin{align}\label{eq:feedback_control}
        \hat \alpha^u_s: &=- (O^u_s)^{-1}\left(U^u_sX^u_s + \int_U V_s(u,v)\bar X^v_s dv + \Gamma^u_s\right),
    \end{align}
for every $s \in [0,T]$, where $X^u_s$ is the unique solution to the closed loop equation \eqref{eq:closed_loop};
\item  The value function $V(t,\xi):=\Inf_{\alpha\in\Ac} J_{}(t,\xi,\alpha)$ is given by $V(t,\xi)=\mathcal{V}(t,\xi)$ (recall \eqref{def_v}).
\end{enumerate}
\end{Theorem}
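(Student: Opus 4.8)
The plan is to exploit the fundamental relation of Proposition \ref{prop:fundamental_rel} together with the a priori positivity \eqref{eq:J_geq0} and the well-posedness of the closed-loop equation \eqref{eq:closed_loop}. The key observation is that the right-hand side of \eqref{eq:fundamental_relation} is the sum of $\mathcal V(t,\xi)$, which does not depend on $\alpha$, and an integral of a nonnegative quadratic form in the ``residual'' $\alpha^u_s + (O^u_s)^{-1}(U^u_sX^u_s + \int_U V_s(u,v)\bar X^v_s\,\d v + \Gamma^u_s)$; here nonnegativity comes from $O^u_s = R^u + (D^u)\trans K^u_s D^u \in \mathbb S^m_{>+}$, which is positive definite uniformly in $u$ and $s$ by \eqref{ass:posR} and \eqref{eq:Ku_bounded}. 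Consequently $J(t,\xi,\alpha) \ge \mathcal V(t,\xi)$ for every $\alpha \in \mathcal A$.

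Next I would establish that the lower bound is attained exactly by the feedback control \eqref{eq:feedback_control}. The candidate $\hat\alpha$ is defined in closed-loop form via the solution $\hat X$ of \eqref{eq:closed_loop}, which exists, is unique, and lies in the right space by the paragraph preceding the theorem (linear mean-field SDE with bounded, time-continuous coefficients, invoking Remark \ref{rem:non-autonomous_coeff}). One must check that $\hat\alpha$ is actually admissible, i.e. $\hat\alpha \in \mathcal A_t(\xi)$: it is of the required functional form in $(u,s,W^u_{\cdot\wedge s},Z^u)$ since $\hat X^u_s$ is $\mathcal F^u_s$-measurable and $\bar X^v_s = \mathbb E[\hat X^v_s]$ is deterministic, the measurability in $u$ of the relevant maps follows from the measurability built into Theorem \ref{exuniqsol} and the $L^2/L^\infty$-regularity of all coefficients and of $K,\bar K,Y$, and the square-integrability $\int_U\int_t^T \mathbb E[|\hat\alpha^u_s|^2]\,\d s\,\d u < \infty$ follows from \eqref{estimX} applied to the closed-loop dynamics together with the uniform bounds on $(O^u_s)^{-1}$, $U^u_s$, $V_s$, $\Gamma^u_s$. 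With $\alpha = \hat\alpha$ the residual in \eqref{eq:fundamental_relation} vanishes identically, so $J(t,\xi,\hat\alpha) = \mathcal V(t,\xi)$. Hence $V(t,\xi) = \inf_{\alpha}J(t,\xi,\alpha) = \mathcal V(t,\xi)$, proving part 2, and the infimum is achieved at $\hat\alpha$.

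For uniqueness of the optimizer, suppose $\alpha^* \in \mathcal A$ also achieves $J(t,\xi,\alpha^*) = \mathcal V(t,\xi)$. Then \eqref{eq:fundamental_relation} forces $\int_t^T \int_U \mathbb E[\langle O^u_s \, r^u_s, r^u_s\rangle]\,\d s\,\d u = 0$ where $r^u_s := \alpha^{*,u}_s + (O^u_s)^{-1}(U^u_s X^{*,u}_s + \int_U V_s(u,v)\bar X^{*,v}_s\,\d v + \Gamma^u_s)$ and $X^*$ is the state driven by $\alpha^*$. Since $O^u_s \ge c\,\mathrm{Id}$ uniformly, this gives $r^u_s = 0$ for a.e. $(u,s)$ and $\mathbb P$-a.s., i.e. $\alpha^*$ satisfies the feedback relation \eqref{eq:opt-control} along its own state trajectory; substituting back, $X^*$ solves the closed-loop equation \eqref{eq:closed_loop}, which has a unique solution, so $X^* = \hat X$ and therefore $\alpha^* = \hat\alpha$ (as elements of $\mathcal A$, i.e. up to the usual a.e. identifications).

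The main obstacle I expect is not the variational argument itself — which is essentially forced by Proposition \ref{prop:fundamental_rel} — but the bookkeeping required to verify that $\hat\alpha$ genuinely belongs to $\mathcal A_t(\xi)$: one has to confirm the measurability in $u$ of the maps $u \mapsto \mathbb E[\hat X^u_s]$, $u \mapsto \mathbb E[\hat\alpha^u \xi^u]$, etc., relying on the regularity statements in Theorem \ref{exuniqsol} and Remark \ref{rem:non-autonomous_coeff}, and to check that the feedback, although expressed through the whole collection $\{\bar X^v\}_{v\in U}$ via the operator $V_s$, still produces a policy of the admissible functional form depending only on the idiosyncratic data $(W^u,Z^u)$. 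A secondary technical point is ensuring that the identity $\mathcal V_t = \mathcal V(t,\xi)$ used in the proof of Proposition \ref{prop:fundamental_rel} extends to the case $\beta,\gamma \neq 0$ with the $Y$ and $\Lambda$ terms present; this is where equations \eqref{eq:lineqY_L2} and \eqref{eq:lineqLambda} and the full (non-simplified) Itô computation must be invoked, but it is routine given the triangular structure noted in Remark \ref{rem:system_of_differential_equations}.
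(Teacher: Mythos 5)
Your proposal is correct and follows essentially the same route as the paper: the fundamental relation of Proposition \ref{prop:fundamental_rel} gives $J(t,\xi,\alpha)\ge\mathcal V(t,\xi)$ via positivity of $O^u_s$, equality holds exactly when the residual vanishes, and uniqueness of the optimizer is reduced to uniqueness of solutions to the closed-loop equation \eqref{eq:closed_loop}. Your additional care about admissibility of $\hat\alpha$ is handled in the paper by the same reference to the proof of Theorem \ref{exuniqsol} in the appendix, so there is no substantive difference.
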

\begin{proof}
Observe that $\hat\alpha\in\Ac$ (this follows from the proof of Theorem \ref{exuniqsol} in Appendix \ref{app:exuniqsol}).  By the fundamental relation \eqref{eq:fundamental_relation}, we have
\begin{itemize}
    \item $J_{}(t,\xi,\alpha) \geq \mathcal{V}(t,\xi)$,  for all $\alpha$ $\in$ $\Ac$; 
    \item we have equality, i.e., 
$J_{}(t,\xi,\alpha) = \mathcal{V}(t,\xi),$
if and only if a control $\alpha$ satisfies \eqref{eq:feedback_control}.  Hence, by uniqueness of solutions to the closed loop equation \eqref{eq:closed_loop}, we have equality if and only if $\alpha= \hat \alpha.$
\end{itemize}
This implies the statement of the theorem.
\end{proof}

\begin{Remark}
    Observe that in the homogeneous case, i.e. when $G_A,G_C,G_Q,G_I,G_H$ are constant in $(u,v)$, we retrieve the results for the classical LQ mean-field control (see for example \cite{yong13}, \cite{huangetal15}, \cite{sunyong17}, \cite{baseipham2019}).
\end{Remark}

\section{Application to systemic risk with heterogeneous banks}\label{sec:systemic_risk}

In this section,  we introduce and solve a systemic risk problem with heterogeneous banks, extending the model proposed in \cite{systemicrisk} with homogeneous interaction, i.e., in a standard linear-quadratic McKean-Vlasov control problem. 
See also \cite{sun-hetsysrisk}, for the mean field game case, with homogeneity within groups and heterogeneity between groups. 

In what follows, all the quantities involved are real-valued.
Consider a model of interbank borrowing and lending of $N$ heterogenous banks, with the log-monetary reserve of each bank $i $ given by \begin{align} \d X^i_s &= \; \Big[\frac {k} {N_i} \sum_{j=1}^N G(u_i,u_j)(X_t^i-X_t^j)+\alpha^i_s \Big] \,\d s + \sigma^i\,\d W^i_s,\qquad X^i_t = \xi^i.\end{align} 
Here, $k \le 0$, $G$ is a graphon i.e. a bounded, symmetric, measurable function from $U\times U$ into $\mathbb R$, measuring the rate of borrowing/lending between bank $i$ and bank $j$,  $N_i= \sum_{j=1}^N G(u_i,u_j)$, with $u_i=i/N$,  and $\sigma^i > 0$ is the volatility coefficient of the bank reserve. 
All banks can control their rate of borrowing/lending to a central bank with the collection of  policies $\alpha = (\alpha^i)_{i=1}^N$.

Taking heuristically the limit for $N\to\infty$, we are led to study a continuum of heterogeneous banks, indexed by $u\in U$, where the log-monetary reserve of each bank $u\in U$ is governed by 
\begin{align} 
\d X^u_s &= \; \Big[
k\big(
X^u_s -\int_U \tilde G_{k}(u,v)\,\bar X^{v}_s\,\d v \big)
+\alpha^u_s \Big] \,\d s + \sigma^u\,\d W^u_s,\qquad X^u_t = \xi^u,
\end{align} 
where $\tilde G_k$ is a graphon i.e. a bounded, symmetric, measurable function from $U\times U$ into $\mathbb R$, $\sigma^u>0$, and $\alpha=(\alpha^u)_u$ is the collection of policies.

The aim of the central bank is to mitigate systemic risk by  minimizing over all $\alpha$ an aggregate cost functional of the form
\begin{small}
\begin{align*}
\int_U \E\Big[&
\int_t^T\Big( 
\eta^u\big(
X^u_s -\int_U \tilde G_\eta(u,v)\,\bar X^{v}_s\,\d v
\big)^2  + (\alpha^u_s)^2\Big) \d s + r^u\big(
X^u_T -\int_U \tilde G_r(u,v)\,\bar X^{v}_T\,\d v
\big)^2 \Big]\, \d u,
\end{align*}
\end{small}
where 
$ \eta^u > 0, r^u > 0$ are positive parameters for penalizing departure from the weighted average, and the term $(\alpha^u_s)^2$ represents the cost of borrowing/lending from the central bank.

\vspace{1mm}

We have shown in Appendix \ref{subsec:centered_formulation}, how to rewrite the above functional in the form \eqref{eq:J_theory}. Here, we have
\begin{small}
\begin{align*}
J(t,\xi,\alpha)=&\int_U \E\bigg[
\int_t^T\left(  
\eta^u (X^u_s)^2+ \bar X^{u}_s \int_U G_\eta(u,v)  \bar X^{v}_s  \d v 
+ (\alpha^u_s)^2 \right)\d s\\
&\qquad\qquad\qquad\qquad+ r^u(X^u_T)^2 +\bar X^{u}_T \int_U G_r(u,v) \bar X^{v}_T  \d v
\bigg]\,\d u,
\end{align*}
\end{small}
where $G_\eta(u,v):=\int_U \eta^w  \tilde G_\eta(w,v)\tilde G_\eta(w,u) \d w -  (\eta^u+\eta^v)  \tilde G_\eta(u,v)$,  $G_r(u,v)$ $:=$ $\int_U r^w  \tilde G_r(w,v)\tilde G_r(w,u) \d w$ $-$ $( r^u +r^v) \tilde G_r(u,v)$. Hence,  we can apply  Theorem \ref{th:verification} to solve the optimal control problem. In particular, the value function is in the quadratic  form
\begin{align} 
V(t,\xi) &= \; \int_U\E\left[ K^u_t (\xi^u)^2\rangle \right]\d u +\int_U\int_U\bar \xi^u\bar K_t(u,v)\bar \xi^v\d v \d u.
\end{align} 
The Riccati equation for $K^u$ is the following
\begin{align}
    \dot K_t^u + 2kK^u_t + \eta^u - (K^u_t)^2 = 0, \qquad K^u_T = r^u,
\end{align}
which can be explicitly solved as
\begin{align}
    K^u_t = \frac{-\eta^u\left(e^{(\delta^{u,+}-\delta^{u,-})(T-t)}-1\right)-r^u\left(\delta^{u,+}e^{(\delta^{u,+}-\delta^{u,-})(T-t)}-\delta^{u,-}\right)}{\left(\delta^{u,-}e^{(\delta^{u,+}-\delta^{u,-})(T-t)}-\delta^{u,+}\right)-r^u\left(e^{(\delta^{u,+}-\delta^{u,-})(T-t)}-1\right)},
\end{align}
where $\delta^{u,\pm} := -k \pm \sqrt{k^2 + \eta^u}$. 

\vspace{1mm}

The abstract Riccati equation for $\bar K(u,v)$ is written as 
\begin{align}
    &\dot{\bar K}_t(u,v) -k\tilde G_k(u,v)K^u_t - k\tilde G_k(u,v)K^v_t + 2k\bar K_t(u,v)\\
    &\quad-\int_U\bar K_t(u,w)k\tilde G_k(w,v)\d w - \int_U\tilde G_k(u,w)k\bar K_t(w,v)\d w\\
    &\quad +G_\eta(u,v) - \bar K_t(u,v)\left(K^u_t + K^v_t\right) \\
    &\quad -\int_U\bar K_t(u,w)\bar K_t(w,v)\d w = 0\\
    &\bar K_T(u,v) = G_r(u,v)
\end{align}

\vspace{1mm}

The linear equation for $Y$ is:
\begin{align}
    &\dot Y_t^u +kY^u_t -\int_Uk\tilde G_k(u,v)Y^v_t\d v -K^u_tY^u_t -\int_U\bar K_t(u,v)Y^v_t\d v =0,
\end{align}
with  $Y^u_T=0$; thus $Y^u_t=0$ for all $t\in[0,T]$, while  the linear equation for $\Lambda$ is
\begin{align}
    & (\sigma^u)^2K^u_t +\dot\Lambda^u_t =0,\qquad \Lambda^u_T=0,
\end{align}
which leads to $\Lambda_t^u = (\sigma^u)^2\int_t^TK^u_s\d s$. 

\vspace{1mm}

{Finally, the optimal control is
\begin{align}
    \hat \alpha^u_s = -K^u_sX^u_s -\int_U\bar K_s(u,v)\bar X^v_s\d v = -K^u_s\left(X^u_s + \int_U\frac{\bar K_s(u,v)}{K^u_s}\bar X^v_s\d v\right).
\end{align}}
\begin{Remark}
    If we set all the coefficients of the problem to be independent of $u$, and $\tilde G_k(u,v)=\tilde G_\eta(u,v)=\tilde G_r(u,v) =1$, we retrieve  the optimal control of the standard systemic risk model. In this case, we check that  $\bar K_t(u,v)=-K^u_t$ satisfies the abstract Riccati equation. Indeed, the abstract Riccati equation becomes:
    \begin{align}
        &\dot{\bar K}_t(u,v) -kK_t - kK_t + (k+k)\bar K_t(u,v) -\int_U\bar K_t(u,w)k\d w - \int_Uk\bar K_t(w,v)\d w\\
        &\quad +\eta - (\eta+\eta) - (K_t+K_t)\bar K_t(u,v) -\int_U\bar K_t(u,w)\bar K_t(w,v)\d w\\
        =& \; \dot{\bar K}_t(u,v) -2kK_t + 2k\bar K_t(u,v) -\int_U\bar K_t(u,w)k\d w - \int_Uk\bar K_t(w,v)\d w\\
        &\quad -\eta - 2K_t\bar K_t(u,v) -\int_U\bar K_t(u,w)\bar K_t(w,v)\d w  = 0, \\
        &\bar K_T(u,v) = -r.
    \end{align}
    It is easy to see that $\bar K_t(u,v):=-K_t$ satisfies the above equation (that actually reduces to the standard Riccati equation for $K$). Therefore the optimal control for the problem is $\hat\alpha_s = -K_s(X_s-\bar X_s)$.
\end{Remark}

{
\begin{Remark}
    The classical example of systemic risk (see \cite{pham-wei-bellman}) includes also mixed terms of the form $\alpha_sX_s$ and $\alpha_s\bar X_s$. In this heterogeneous framework one might consider a cost functional of the form
    \begin{small}
    \begin{align*}
    \int_U \E\Big[&
    \int_t^T\Big( 
    \eta^u\big(
    X^u_s -\int_U \tilde G_\eta(u,v)\,\bar X^{v}_s\,\d v
    \big)^2 + q^u\alpha^u_s\big(X^u_s-\int_UG_q(u,v)\bar X^v_s \d v\big) + (\alpha^u_s)^2\Big) \d s\\
    &+ r^u\big(
    X^u_T -\int_U \tilde G_r(u,v)\,\bar X^{v}_T\,\d v
    \big)^2 \Big]\, \d u,
    \end{align*}
    \end{small}
    where $q^u > 0$ is a positive parameter for the incentive to borrowing ($\alpha^u_t > 0$) or lending ($\alpha_t^u < 0$), and $G_q$ is possibly different from $\tilde G_\eta,\tilde G_r,\tilde G_k$.
    The computations carried out throughout this paper can be easily extended to the case including mixed terms. 
    In the systemic risk example, in particular, the optimal control would be given by
    \begin{align}
    \hat \alpha^u_s = -(K^u_s +\frac{q^u}{2})(X^u_s-\int_UG_q(u,v)\bar X^v_s\d v) -\int_U(\bar K_s(u,v) + K^u_sG_q(u,v))\bar X^v_s\d v,
    \end{align}
    where $\bar K$ is solution of the infinite-dimensional Riccati equation:
    \begin{align}\label{eq:barK_sysrisk}
    &\dot{\bar K}_t(u,v) -k\tilde G_k(u,v)K^u_t - k\tilde G_k(u,v)K^v_t + 2k\bar K_t(u,v)\\
    &\quad-\int_U\bar K_t(u,w)k\tilde G_k(w,v)\d w - \int_U\tilde G_k(u,w)k\bar K_t(w,v)\d w\\
    &\quad +G_\eta(u,v) - \bar K_t(u,v)\left(K^u_t + K^v_t + \frac{q^u+q^v}{2}\right) + K^u_t\frac{q^u}{2}G_q(u,v)\\
    &\quad + K^v_t\frac{q^v}{2}G_q(v,u) + \frac{(q^u)^2}{4}G_q(u,v) + \frac{(q^v)^2}{4}G_q(v,u)\\
    &\quad -\int_U\bigg( \bar K_t(u,w)\bar K_t(w,v) - \frac{q^w}{2}G_q(w,u)\bar K_t(w,v) \\
    &\qquad- \frac{q^w}{2}G_q(w,v)\bar K_t(u,w) + \frac{(q^w)^2}{4}G_q(w,u)G_q(w,v)\bigg)\d w = 0\\
    &\bar K_T(u,v) = G_r(u,v)
    \end{align}
    However, we refrain from the study of a more general model that includes mixed terms. The main challenge in this setting lies in establishing the existence and uniqueness of a solution $\bar K$ of equation \eqref{eq:barK_sysrisk}.
    Indeed, the proof of Proposition \ref{cor:a_priori_estimate_barK} does not apply to the case with mixed terms as the cost functional $J$ would no longer be positive. 
    We believe that, with additional effort and more refined techniques, it is possible to extend the a priori estimate in Proposition \ref{cor:a_priori_estimate_barK} to this general framework, but we leave it for further study.
\end{Remark}
}

\appendix

\section{Proof of Theorem \ref{exuniqsol}}\label{app:exuniqsol}

We define the map $\boldsymbol\Psi:L^2(U;C^d_{[t,T]})\to L^2(U;C^d_{[t,T]})$, $\boldsymbol\Psi(Y) = \E[X^Y]$, where $X^Y:u\mapsto X^{Y,u}$ and, for all $u\in U$, $X^{Y,u}$ is the solution of the standard SDE
\begin{align}\label{dyn_Y}
\begin{cases}
    &\d X^{Y,u}_s =\left [ \beta^u + A^u X^{Y,u}_s +  \int_U G_{  A}(u,v)Y^v_s \d v + B^u \alpha^u_s \right] \d s \\
    &\qquad\qquad +\left [\gamma^u + C^u  X^{Y,u}_s +    \int_U G_{C}(u,v) Y^v_s\d v + D^u \alpha^u_s \right ] \d W^u_s,\quad s\in[t,T]\\ 
    & X_t^{Y,u}=\xi^u.
\end{cases}
\end{align}
We will prove that $\boldsymbol{\Psi}$ has a unique fixed point $\bar Y$, which concludes the proof. 

\vspace{1mm}

\noindent \textit{Step 1.} We start by showing that the map $\boldsymbol{\Psi}$ is well defined. We fix $Y\in L^2(U,C^d_{[t,T]})$ and we consider the dynamics \eqref{dyn_Y} for $X^Y=(X^{Y,u})_u$. 
Standard results ensure existence and uniqueness of a solution $X^Y$ for a.e. $u$. From the standard Picard iteration, we have that for a.e. $u\in U$ fixed, 
\begin{equation}\label{eq:convL2}
\E\left[ \sup_{s\in[t,T]}|X^{Y,u}_s-X^{Y,n,u}_s|^2\right]\xrightarrow[]{n\to\infty}0,
\end{equation}
where $X^{Y,u,0}_s=\xi^u$ $\forall s$ and
\begin{align}
    X^{Y,u,n}_s &= \xi^u + \int_t^s[ \beta^u + A^u X^{Y,u,n-1}_r +  \int_U G_{  A}(u,v)Y^v_r \d v + B^u \alpha^u_r]\d r \\
    &\quad + \int_t^s [\gamma^u + C^u  X^{Y,u,n-1}_r +    \int_U G_{C}(u,v) Y^v_r\d v + D^u \alpha^u_r]\d W^u_r,\qquad\forall n.
\end{align} 
By induction, we have that $u\mapsto\E[X^{Y,u,n}_s]$ is a Borel measurable map for all $n,s$. 
{Indeed, for $n=0$, $u\mapsto \E[\xi^u]$ is measurable by hypothesis. For $n\ge 1$ we have that:
\begin{align}
    \E[X^{Y,u,n}_s] = \E\left[ \xi^u\right] + \int_t^s[ \beta^u + A^u \E\left[X^{Y,u,n-1}_r\right] +  \int_U G_{  A}(u,v)Y^v_r \d v + B^u \E\left[\alpha^u_r\right]\d r, 
\end{align}
so that the measurability of $\E\left[X^{Y,u,n-1}_s\right]$ implies the measurability of $\E\left[X^{Y,u,n}_s\right]$.
}
As for $u\mapsto\E[X^{Y,u,n,i}_{r}X^{Y,u,n,j}_{s}]$, we have that
\begin{align}
    \E[X^{Y,n,u,i}_{s}X^{Y,n,u,j}_{r}] =& \E\bigg[ \xi^{u,i}\left(\xi^{u,j} + \int_t^s\left[\beta^{u,j} + (A^uX^{Y,n-1,u}_q)^j +  (\int_U G_{A}(u,v)Y^v_q \d v)^j + (B^u \alpha^u_q)^j\right]\d q\right)\\
    &+ \xi^{u,j}\left(\xi^{u,i} + \int_t^r\left[\beta^{u,i} + (A^uX^{Y,n-1,u}_q)^i +  (\int_U G_{A}(u,v)Y^v_q \d v)^i + (B^u \alpha^u_q)^i\right]\d q\right)\\
    & \int_t^{s\wedge r}\left(\gamma^{u,i} + (C^u  X^{Y,u,n-1}_q)^i +    (\int_U G_{C}(u,v) Y^v_q\d v)^i + (D^u \alpha^u_q)^i\right)\\
    &\qquad\qquad\left(\gamma^{u,j} + (C^u  X^{Y,u,n-1}_q)^j +    (\int_U G_{C}(u,v) Y^v_q\d v)^j + (D^u \alpha^u_q)^j\right)\d q\bigg],
\end{align}
and {again} by induction and using the fact that $u\mapsto\E[\xi^{u,i}\alpha^{u,j}_{s}]$ is measurable for all $i,j,s$, that $u\mapsto\E[X^{Y,n,u,i}_{r}X^{Y,n,u,j}_{s}]$ is Borel measurable for all $i,j,n,r,s$.
Therefore, by \eqref{eq:convL2}, we have that $u\mapsto\E[X^{Y,u}_s]$ and $u\mapsto\E[X^{Y,u,i}_{r}X^{Y,u,j}_{s}]$ are Borel measurable maps for all $i,j,s,r$. {Finally we show that} $\E[X^Y]\in L^2(U;C^d_{[t,T]})$. 
{To prove this, we need to show that $\int_U\sup_{s\in[t,T]}|\E[X^{Y,u}_s]|^2\d u<\infty$, but we can actually show that the stronger condition $\int_U\E[\sup_{s\in[t,T]}|X^{Y,u}_s|^2]\d u<\infty$ holds.
Indeed, by using Gronwall inequality we obtain
\begin{align}
   & \int_U\E[\sup_{s\in[t,T]}|X^{Y,u}_s|^2]\d u \le M_T\int_U\left( \E[|\xi^u|^2] + \int_t^T \left( |\beta^u|^2 + \left|\int_U G_A(u,v)Y^v_r\d v\right|^2 +|B^u|^2\E[|\alpha_r^u|^2]\right)\d r  \right)\d u,
\end{align}
where $M_T$ is a positive constant.
To see that the right-hand side is finite, {using \eqref{eq:Hilbert-schmidt_integral}, we observe that $\int_t^T\int_U \left|\int_U G_A(u,v)Y^v_r\d v\right|^2\d u\d r \le \|G_A\|_{L^2}^2\int_t^T\int_U|Y^v_r|^2\d v\d r <\infty.$}
}

\vspace{1mm}

\noindent \textit{Step 2.} We will now prove that $\boldsymbol{\Psi}$ has a unique fixed point $\bar Y$, which concludes the proof.
Given $Y,Z\in L^2(U;C^d_{[t,T]})$, $r\in[t,T]$, we have by standard estimates 
{
\begin{align}
    \sup_{s\in[t,r]}\left|\E\left[X^{Y,u}_s-X^{Z,u}_s\right]\right|^2 \le M \int_t^r\left(\sup_{q\in[t,s]}\left|\E[X^{Y,u}_q-X^{Z,u}_q]\right|^2 + {\int_U |G_A(u,v)|^2 \d v} \int_U\sup_{q\in[t,s]}|Y^u_q-Z^u_q|^2\d u\right)\d s
\end{align}
}
where $M>0$ does not depend on $u$, thanks to the boundedness of the coefficients. Therefore, by Gronwall inequality 
\begin{equation}\label{sol_estim}
\sup_{s\in[t,r]}|\E[X^{Y,u}_s-X^{Z,u}_s]|^2 \le M{\int_U|G_A(u,v)|^2\d v}\int_t^r\int_U\sup_{q\in[t,s]}|Y^u_q-Z^u_q|^2\d u\d s.
\end{equation}
Therefore, taking $r=T$, we have that
\begin{align}
    \|\boldsymbol{\Psi}(Y) - \boldsymbol{\Psi}(Z)\|^2_{L^2(U;C^d_{[t,T]})} &= \int_U\sup_{s\in[t,T]}|\E[X^{Y,u}_s-X^{Z,u}_s]|^2\d u \le M{\|G_A\|_{L^2}^2}(T-t)\int_U\sup_{q\in[t,T]}|Y^u_q-Z^u_q|^2\d u\\
    &= M{\|G_A\|_{L^2}^2}(T-t)\|Y-Z\|^2_{L^2(U;C^d_{[t,T]})}.
\end{align}
By standard arguments, we can conclude that $\boldsymbol\Psi$ has a unique fixed point.

\section{Alternative formulations of the problem}\label{app:different-formulations}

We provide here two alternative formulations of the problem, that can easily be rewritten in the form of Section \ref{sec:formulation}.

\subsection{Centered formulation}\label{subsec:centered_formulation} 

Suppose you want to minimize, over all $\alpha \in \mathcal A$, a cost functional of the form
\begin{align*}
\tilde J_{}(t,\xi,\alpha):=\int_U \E\bigg[&
\int_t^T 
\left \langle Q^u \left(
X^u_s -\int_U \tilde G_Q(u,v)\,\bar X^{v}_s\,dv
 \right), X^u_s -\int_U \tilde G_Q(u,v)\,\bar X^{v}_s\,dv
\right \rangle + \langle R^u \alpha^u_s, \alpha^u_s \rangle \\
&+\left \langle H^u \left(
X^u_T -\int_U \tilde G_H(u,v)\,\bar X^{v}_T\,dv
 \right), X^u_T -\int_U \tilde G_H(u,v)\,\bar X^{v}_T\,dv
\right \rangle
\bigg]\,du, 
\end{align*}
where $Q, H \in L^\infty(U;\S_+^{d}),$ $ R\in L^\infty (U;\S_{>+}^{m}), \tilde G_{ Q},\tilde G_{ H}$ $\in$ ${L^2}(U\times U;\R^{d\times d})$, and  such that 
\begin{align}
\tilde G_{ Q}(u,v)=\tilde G_{ Q}(v,u)\trans, \quad\tilde G_{ H}(u,v)=\tilde G_{ H}(v,u)\trans.
\end{align} 
{Notice that $\tilde J(t,\xi,\alpha)\geq 0$}.
An example of a functional of this form  is described in Section \ref{sec:systemic_risk}  in the context of systemic risk models.

\vspace{1mm}

Let us check  that $\tilde J$ can be rewritten in the form \eqref{eq:J_theory}. 
We start with the following preliminary observations.
   \begin{Remark}\label{rem:properties_tildeG}
   \begin{enumerate}
       \item The following equality holds (recall that $Q^u, H^u$ are symmetric)
       \begin{small}
\begin{align*}
   & \int_U \left\langle Q^u\int_U \tilde G_Q(u,v)\,y^{v} \,dv , \int_U \tilde G_Q(u,v)\,y^{v} \,dv \right\rangle \d u\\
   & = \int_U  \int_U  \int_U \left\langle Q^u  \tilde G_Q(u,v)y^{v}, \tilde G_Q(u,w) y^{w}\right\rangle \d v   \d w \d u \\
   &=\int_U \int_U  \int_U \left\langle Q^w  \tilde G_Q(w,v)y^{v}, \tilde G_Q(w,u) \bar X^{u}_s\right\rangle \d v  \d u \d w \\
   &=\int_U \left\langle y^{u}, \int_U   \int_U \tilde G_Q (w,u)^TQ^w  \tilde G_Q (w,v) \d w y^{v} \d v\right\rangle \d u\\
   &=\int_U \left\langle y^{u}, \int_U   \int_U \tilde G_Q (u,w)Q^w  \tilde G_Q (w,v) \d w y^{v} \d v\right\rangle \d u,
\end{align*}
 \end{small}
(and an analogous equality holds for the term in $H^u$).
\item  We observe that 
\begin{align}\label{eq:symmetric_structure_graphon_cost_term}
    \int_U \langle y^u, Q^u\int_U \tilde G_Q(u,v)y^v\d v \rangle \d u = \int_U \langle y^u, \int_U\tilde G_Q(u,v)\frac{Q^u + Q^v}{2} y^v \d v \rangle \d u,
\end{align}
(and similarly for the term in $H^u$).
Indeed, we have (recall that $Q$ is symmetric)
\begin{align}
& \int_U \langle y^u, Q^u\int_U \tilde G_Q(u,v) y^v\d v \rangle \d u=\int_U\int_U \langle y^u,Q^u\tilde G_Q(u,v)y^v\rangle\d v\d u \\
&= \int_U\int_U \langle y^v,Q^v\tilde G_Q(v,u)y^u\rangle\d v\d u=\int_U \langle y^u, \int_U \tilde G_Q(u,v) Q^v y^v \d v \rangle \d u,
\end{align}
so that, summing $\int_U \langle y^u, Q^u\int_U \tilde G_Q(u,v)y^v\d v \rangle \d u $ on both members of the previous equality, we have the claim.
   \item Using the previous points, we have 
\begin{align}
{0}&{\leq} \int_U \E\left[\left \langle Q^u \left(
X^u -\int_U \tilde G_Q(u,v)\,\bar X^{v}\,dv
 \right),  X^u -\int_U \tilde G_Q(u,v)\, \bar X^{v}\,dv
\right \rangle\right]\\
&= \int_U \E[\left\langle Q^u X^u, X^u\right\rangle] \d u + \int_U \left\langle \bar X^{u}, \int_U  \left( \int_U \tilde G_Q(u,w)Q^w  \tilde G_Q(w,v) \d w - 2Q^u  \tilde G_Q(u,v) \right) \bar X^{v}  \d v\right\rangle \d u\\
&=\int_U \E[\langle  X^u,Q^u X^u\rangle] du + \int_U \langle \bar X^u,  \int_UG_{Q}(u,v)\bar X^v \d v \rangle  du,
\end{align}
where we have set $G_Q(u,v):=\int_U \tilde G_Q(u,w)Q^w \tilde G_Q(w,v) \d w -  (Q^u+Q^v)  \tilde G_Q(u,v)$. An analogous property holds for the term in $H$, by setting  $G_H(u,v):=\int_U \tilde G_H(u,w)H^w  \tilde G_H(w,v) \d w - ( H^u +H^v) \tilde G_H(u,v)$. {Hence, condition \eqref{eq:positivity_hpII} is satisfied.}
   \end{enumerate}
\end{Remark}

\vspace{1mm}

Developing the square in $\tilde J$ and using the previous remark,  we have
\begin{align*}
\tilde J_{}(t,\xi,\alpha) & = \;  \int_U \E\Big[
\int_t^T  \Big( \left\langle Q^u X^u_s, X^u_s\right\rangle + \left\langle\bar X^{u}_s, \int_U G_Q(u,v)  \bar X^{v}_s  \d v\right\rangle \\
& \qquad \quad \quad   + \left\langle R^u\alpha^u_s,\alpha^u_s\right\rangle\Big) \d s   +\left\langle H^uX^u_T,X^u_T\right\rangle \Big]\,\d u+ \left\langle\bar X^{u}_T, \int_U G_H(u,v) \bar X^{v}_T  \d v\right\rangle.
\end{align*}

\subsection{Symmetric formulation}\label{subsec:symmetric_formulation}

Suppose you want to minimize, over all $\alpha \in \mathcal A$, a cost functional of the form
\begin{align*}
    \tilde J_{}(t,\xi,\alpha) := \int_U\E\Big[ & \int_t^T \langle X^u_s,Q^u X^u_s \rangle + \langle  \int_U \tilde G_{Q}(u,v)\bar X^v_s\d v,\bar Q^u\int_U \tilde G_{Q}(u,v)\bar X^v_s\d v\rangle + \langle \alpha^u_s,R^u \alpha^u_s \rangle\\
    &+ \langle X^u_T, H^u X^u_T \rangle + \langle \int_U \tilde G_{H}(u,v)\bar X^v_T\d v ,\bar H^u \int_U \tilde G_{H}(u,v)\bar X^v_T\d v \rangle \Big]\d u,
\end{align*}
where $Q, H \in L^\infty(U;\S_+^{d}),$ $ R\in L^\infty (U;\S_{>+}^{m})$, $\tilde G_{ Q}, \tilde G_{ H}$ $\in$ ${L^2}(U\times U;\R^{d\times d})$.

{In a similar way} to the centered case, using Remark \ref{rem:properties_tildeG}, $\tilde J$    can be rewritten into the  form \eqref{eq:J_theory} with 
$G_Q(u,v):=\int_U \tilde G_Q(u,w)Q^w \tilde G_Q(w,v) \d w $, $G_H(u,v):=\int_U \tilde G_H(u,w)H^w  \tilde G_H(w,v) \d w $.

\begin{small}
\paragraph{\textbf{Acknowledgments.}} The authors would like to thank Samy Mekkaoui and the two anonymous referees for their careful reading of the paper and useful remarks. Filippo de Feo is grateful to Huyên Pham for the invitation to Universit\'e Paris Cit\'e to work on this paper. Filippo de Feo acknowledges funding by the Deutsche Forschungsgemeinschaft (DFG, German Research Foundation) – CRC/TRR 388 "Rough Analysis, Stochastic Dynamics and Related Fields" – Project ID 516748464, by INdAM (Instituto Nazionale di Alta Matematica F. Severi) - GNAMPA (Gruppo Nazionale per l'Analisi Matematica, la Probabilità e le loro Applicazioni), and by the Italian Ministry of University and Research (MUR), in the framework of PRIN projects 2017FKHBA8 001 (The Time-Space Evolution of Economic Activities: Mathematical Models and Empirical Applications) and 20223PNJ8K (Impact of the Human Activities on the Environment and Economic Decision Making in a Heterogeneous Setting: Mathematical Models and Policy Implications).
\end{small}

\bibliographystyle{plain}
\bibliography{biblio}

@article{cructangpi,
      title={Interacting particle systems on sparse $W$-random graphs}, 
      author={Carla Crucianelli and Ludovic Tangpi},
      year={2024, arXiv:2410.11240},
      eprint={2410.11240},
      archivePrefix={arXiv},
      primaryClass={math.PR},
      url={https://arxiv.org/abs/2410.11240}, 
}

@book {fabbrigozziswiech,
    AUTHOR = {Fabbri, Giorgio and Gozzi, Fausto and \'Swi\k{e}ch,
              Andrzej},
     TITLE = {Stochastic optimal control in infinite dimension},
    SERIES = {Probability Theory and Stochastic Modelling},
    VOLUME = {82},
      NOTE = {Dynamic programming and HJB equations,
              With a contribution by Marco Fuhrman and Gianmario Tessitore},
 PUBLISHER = {Springer, Cham},
      YEAR = {2017},
     PAGES = {xxiii+916},
      ISBN = {978-3-319-53066-6; 978-3-319-53067-3},
   MRCLASS = {49-02 (35Q93 37L55 49K45 49L20 60H30 93-02 93E20)},
  MRNUMBER = {3674558},
MRREVIEWER = {\L.\ Stettner},
       DOI = {10.1007/978-3-319-53067-3},
       URL = {https://doi.org/10.1007/978-3-319-53067-3},
}

@book {zeidler,
    AUTHOR = {Zeidler, Eberhard},
     TITLE = {Nonlinear functional analysis and its applications.},
    NOTE = {Fixed-point theorems. Translated from German, originally published by Teubner, Leipzig, 1976},
    PUBLISHER = {Springer-Verlag, New York},
      YEAR = {1986}
}

@book{benetal13,
    author = {Bensoussan, Alain  and Frehse, Jacques and Yam, Philip},
    title = {Mean Field Games and Mean Field Type Control Theory},
    PUBLISHER = {Springer Briefs in Mathematics},
    YEAR = {2013}, 
    volume = {}
}

@book{yong-zhou,
    author = {Yong, Jiongmin and Zhou, Xun Yu},
    title = {Stochastic Controls: Hamiltonian Systems and HJB Equations},
    PUBLISHER = {Springer-Verlag, New York},
    YEAR = {1999}, 
    volume = {43}
}

@book{cardel18,
    author = {Carmona, Ren\'e and Delarue, Fran\c{c}ois},
    title = {Probabilistic theory of Mean Field Games with Applications, Vol I-II},
    PUBLISHER = {Springer-Verlag, New York},
    YEAR = {2018}, 
    volume = {}
}

@book{lovasz,
    AUTHOR = {Lov\'asz, L\'aszl\'o},
    TITLE = {Large networks and graph limits},
    SERIES = {American Mathematical Society Colloquium Publications},
    VOLUME = {60},
    PUBLISHER = {American Mathematical Society, Providence, RI},
    YEAR = {2012}
}

@article{MFCnon-exch,
     title={Mean-field control of non exchangeable systems}, 
     JOURNAL = {ESAIM COCV, to appear.},
      author={Anna De Crescenzo and Marco Fuhrman and Idris Kharroubi and Huyên Pham},
      year={2024, arXiv:2407.18365},
      eprint={2407.18635},
      archivePrefix={arXiv},
      primaryClass={math.PR},
      url={https://arxiv.org/abs/2407.18635}
}

@article {baseipham2019,
    AUTHOR = {Basei, Matteo and Pham, Huy\^en},
     TITLE = {A weak martingale approach to linear-quadratic
              {M}c{K}ean-{V}lasov stochastic control problems},
   JOURNAL = {J. Optim. Theory Appl.},
  FJOURNAL = {Journal of Optimization Theory and Applications},
    VOLUME = {181},
      YEAR = {2019},
    NUMBER = {2},
     PAGES = {347--382},
      ISSN = {0022-3239,1573-2878},
   MRCLASS = {49N10 (49L20 93E20)},
  MRNUMBER = {3938473},
MRREVIEWER = {Liangquan\ Zhang},
       DOI = {10.1007/s10957-018-01453-z},
       URL = {https://doi.org/10.1007/s10957-018-01453-z},
}

@article {systemicrisk,
    AUTHOR = {Carmona, Ren\'e{} and Fouque, Jean-Pierre and Sun, Li-Hsien},
     TITLE = {Mean field games and systemic risk},
   JOURNAL = {Commun. Math. Sci.},
  FJOURNAL = {Communications in Mathematical Sciences},
    VOLUME = {13},
      YEAR = {2015},
    NUMBER = {4},
     PAGES = {911--933},
      ISSN = {1539-6746,1945-0796},
   MRCLASS = {91A80 (60H30 91B64 93E20)},
  MRNUMBER = {3325083},
MRREVIEWER = {George\ Stoica},
       DOI = {10.4310/CMS.2015.v13.n4.a4},
       URL = {https://doi.org/10.4310/CMS.2015.v13.n4.a4},
}

@article {yong13,
    AUTHOR = {Yong, Jiongmin},
     TITLE = {A Linear-Quadratic Optimal Control Problem for Mean-Field Stochastic Differential Equations},
   JOURNAL = {SIAM Journal on Control and Optimization},
  FJOURNAL = {},
    VOLUME = {51},
      YEAR = {2013},
    NUMBER = {4},
     PAGES = {2809--2838},
      ISSN = {},
   MRCLASS = { },
  MRNUMBER = {},
MRREVIEWER = { },
       DOI = { },
       URL = {},
}

@article {sunyong17,
    AUTHOR = {Sun, Jingrui},
     TITLE = {Mean-Field Stochastic Linear Quadratic Optimal Control Problems: Open-Loop Solvabilities},
   JOURNAL = {ESAIM Control Optimization and  Calculus of Variations},
  FJOURNAL = {},
    VOLUME = {23},
      YEAR = {2017},
    NUMBER = {3},
     PAGES = {1099--1127},
      ISSN = {},
   MRCLASS = { },
  MRNUMBER = {},
MRREVIEWER = { },
       DOI = { },
       URL = {},
}

@article {huangetal15,
    AUTHOR = {Huang, James and Li, Xun and Yong, Jiongmin},
     TITLE = {A Linear-Quadratic Optimal Control Problem for Mean-Field Stochastic Differential Equations in Infinite Horizon},
   JOURNAL = {Mathematical Control and Related Fields},
  FJOURNAL = {},
    VOLUME = {5},
      YEAR = {2015},
    NUMBER = {1},
     PAGES = {97--139},
      ISSN = {},
   MRCLASS = { },
  MRNUMBER = {},
MRREVIEWER = { },
       DOI = { },
       URL = {},
}

@article {pham-wei-bellman,
    AUTHOR = {Pham, Huy\^en and Wei, Xiaoli},
     TITLE = {Bellman equation and viscosity solutions for mean-field
              stochastic control problem},
   JOURNAL = {ESAIM Control Optim. Calc. Var.},
  FJOURNAL = {ESAIM. Control, Optimisation and Calculus of Variations},
    VOLUME = {24},
      YEAR = {2018},
    NUMBER = {1},
     PAGES = {437--461},
      ISSN = {1292-8119,1262-3377},
   MRCLASS = {49L25 (60H30 60K35 93E20)},
  MRNUMBER = {3843191},
MRREVIEWER = {Marco\ Cirant},
       DOI = {10.1051/cocv/2017019},
       URL = {https://doi.org/10.1051/cocv/2017019},
}

@article{copdecpha24,
    AUTHOR = {Coppini, Fabio and De Crescenzo, Anna and Pham, Huy\^en},
     TITLE = {Nonlinear graphon mean-field systems},
   JOURNAL = {Stochastic Process. Appl.},
  FJOURNAL = {Stochastic Processes and their Applications},
    VOLUME = {190},
      YEAR = {2025},
     PAGES = {Paper No. 104728, 19},
      ISSN = {0304-4149,1879-209X},
   MRCLASS = {60K35 (05C80 60J60)},
  MRNUMBER = {4933837},
       DOI = {10.1016/j.spa.2025.104728},
       URL = {https://doi.org/10.1016/j.spa.2025.104728},
}

@article{caihua20,
	author = {Caines, Peter E. and Huang, Minyi},
	date-added = {2024-06-10 17:17:30 +0200},
	date-modified = {2024-06-10 17:18:36 +0200},
	journal = {SIAM Journal on Control and Optimization},
	number = {6},
	pages = {4373-4399},
	title = {Graphon mean field games and their equations},
	volume = {59},
	year = {2020}}

@article{auretal22,
	author = {Aurell, Alexander and Carmona, Ren{\'e} and Lauriere, Mathieu},
	date-added = {2024-06-10 17:15:14 +0200},
	date-modified = {2024-06-10 17:16:33 +0200},
	journal = {Applied Mathematics \& Optimization},
	number = {3},
	title = {Stochastic graphon games: II, the linear quadratic case},
	volume = {85},
	year = {2022}}

@article{bayetal23,
	author = {Bayraktar, Erhan and Chakraborty, Suman and Wu, Ruoyu},
	date-added = {2024-06-10 17:08:19 +0200},
	date-modified = {2024-06-10 17:09:15 +0200},
	journal = {Annals of Applied Probability},
	number = {5},
	pages = {3587-3619},
	title = {Graphon mean field systems},
	volume = {33},
	year = {2023}}

@article{lacker_label-state_2023,
	author = {Lacker, Daniel and Soret, Agathe},
	doi = {10.1287/moor.2022.1329},
	issn = {0364-765X},
	journal = {Mathematics of Operations Research},
	keywords = {93E03, approximate Nash equilibrium, graphon games, mean field games, Primary: 91A16, secondary: 91A43},
	month = nov,
	note = {Publisher: INFORMS},
	number = {4},
	pages = {1987--2018},
	title = {A {Label}-{State} {Formulation} of {Stochastic} {Graphon} {Games} and {Approximate} {Equilibria} on {Large} {Networks}},
	url = {https://pubsonline.informs.org/doi/10.1287/moor.2022.1329},
	urldate = {2024-01-13},
	volume = {48},
	year = {2023},
	bdsk-url-1 = {https://pubsonline.informs.org/doi/10.1287/moor.2022.1329},
	bdsk-url-2 = {https://doi.org/10.1287/moor.2022.1329}}

@article {sun-hetsysrisk,
    AUTHOR = {Sun, Li-Hsien},
     TITLE = {Mean field games with heterogeneous groups: application to
              banking systems},
   JOURNAL = {J. Optim. Theory Appl.},
  FJOURNAL = {Journal of Optimization Theory and Applications},
    VOLUME = {192},
      YEAR = {2022},
    NUMBER = {1},
     PAGES = {130--167},
      ISSN = {0022-3239,1573-2878},
   MRCLASS = {91A16 (49N80 91G80 93E20)},
  MRNUMBER = {4369275},
       DOI = {10.1007/s10957-021-01955-3},
       URL = {https://doi.org/10.1007/s10957-021-01955-3},
}

@article {guattes2005,
    AUTHOR = {Guatteri, Giuseppina and Tessitore, Gianmario},
     TITLE = {On the backward stochastic {R}iccati equation in infinite
              dimensions},
   JOURNAL = {SIAM J. Control Optim.},
  FJOURNAL = {SIAM Journal on Control and Optimization},
    VOLUME = {44},
      YEAR = {2005},
    NUMBER = {1},
     PAGES = {159--194},
      ISSN = {0363-0129,1095-7138},
   MRCLASS = {60H10 (93E20)},
  MRNUMBER = {2176671},
MRREVIEWER = {N.\ U.\ Ahmed},
       DOI = {10.1137/S0363012903425507},
       URL = {https://doi.org/10.1137/S0363012903425507},
}

@article {tessitore1992,
    AUTHOR = {Tessitore, Gianmario},
     TITLE = {Some remarks on the {R}iccati equation arising in an optimal
              control problem with state- and control-dependent noise},
   JOURNAL = {SIAM J. Control Optim.},
  FJOURNAL = {SIAM Journal on Control and Optimization},
    VOLUME = {30},
      YEAR = {1992},
    NUMBER = {3},
     PAGES = {717--744},
      ISSN = {0363-0129},
   MRCLASS = {49N10 (49L20 93C20 93E20)},
  MRNUMBER = {1160150},
MRREVIEWER = {Constantin\ V\^arsan},
       DOI = {10.1137/0330040},
       URL = {https://doi.org/10.1137/0330040},
}

@article {hutang2022,
    AUTHOR = {Hu, Ying and Tang, Shanjian},
     TITLE = {Stochastic {LQ} control and associated {R}iccati equation of
              {PDE}s driven by state- and control-dependent white noise},
   JOURNAL = {SIAM J. Control Optim.},
  FJOURNAL = {SIAM Journal on Control and Optimization},
    VOLUME = {60},
      YEAR = {2022},
    NUMBER = {1},
     PAGES = {435--457},
      ISSN = {0363-0129,1095-7138},
   MRCLASS = {93E20 (49K20 49K45 49N10 60H15)},
  MRNUMBER = {4379619},
       DOI = {10.1137/20M1351904},
       URL = {https://doi.org/10.1137/20M1351904},
}

@article {peng1992,
    AUTHOR = {Peng, Shige},
     TITLE = {Stochastic {H}amilton-{J}acobi-{B}ellman equations},
   JOURNAL = {SIAM J. Control Optim.},
  FJOURNAL = {SIAM Journal on Control and Optimization},
    VOLUME = {30},
      YEAR = {1992},
    NUMBER = {2},
     PAGES = {284--304},
      ISSN = {0363-0129},
   MRCLASS = {60H15 (35R60 49L20 49N10)},
  MRNUMBER = {1149069},
MRREVIEWER = {Daniel\ T\u ataru},
       DOI = {10.1137/0330018},
       URL = {https://doi.org/10.1137/0330018},
}

@article {fenghuhuang2023,
    AUTHOR = {Feng, Xinwei and Hu, Ying and Huang, Jianhui},
     TITLE = {A unified approach to linear-quadratic-{G}aussian mean-field
              team: homogeneity, heterogeneity and quasi-exchangeability},
   JOURNAL = {Ann. Appl. Probab.},
  FJOURNAL = {The Annals of Applied Probability},
    VOLUME = {33},
      YEAR = {2023},
    NUMBER = {4},
     PAGES = {2786--2823},
      ISSN = {1050-5164,2168-8737},
   MRCLASS = {60H10 (49N80 60H30 91A16)},
  MRNUMBER = {4612655},
MRREVIEWER = {Hans\ Peters},
       DOI = {10.1214/22-aap1878},
       URL = {https://doi.org/10.1214/22-aap1878},
}

@article {abimillerpham2021,
    AUTHOR = {Abi Jaber, Eduardo and Miller, Enzo and Pham, Huy\^en},
     TITLE = {Linear-quadratic control for a class of stochastic {V}olterra
              equations: solvability and approximation},
   JOURNAL = {Ann. Appl. Probab.},
  FJOURNAL = {The Annals of Applied Probability},
    VOLUME = {31},
      YEAR = {2021},
    NUMBER = {5},
     PAGES = {2244--2274},
      ISSN = {1050-5164,2168-8737},
   MRCLASS = {93E20 (49K45 49N10 60G22 60H07 60H20)},
  MRNUMBER = {4332695},
MRREVIEWER = {Omar\ Kebiri},
       DOI = {10.1214/20-aap1645},
       URL = {https://doi.org/10.1214/20-aap1645},
}

@article {jabin25,
    AUTHOR = {Jabin, Pierre-Emmanuel and Poyato, David and Soler, Juan},
     TITLE = {Mean-field limit of non-exchangeable systems},
   JOURNAL = {Comm. Pure Appl. Math.},
  FJOURNAL = {Communications on Pure and Applied Mathematics},
    VOLUME = {78},
      YEAR = {2025},
    NUMBER = {4},
     PAGES = {651--741},
      ISSN = {0010-3640,1097-0312},
   MRCLASS = {35R02 (05 35Q49 35Q89 49 60)},
  MRNUMBER = {4863199},
       DOI = {10.1002/cpa.22235},
       URL = {https://doi.org/10.1002/cpa.22235},
}

@article{LQMFG2024,
	author = {{De-xuan} Xu and Zhun Gou and {Nan-jing} Huang and Shuang Gao},
	journal = {arXiv:2401.09030},
	title = {Linear-Quadratic Graphon Mean Field Games with Common Noise},
	year = {2024}}

@article {cosetal23,
    AUTHOR = {Cosso, Andrea and Gozzi, Fausto and Kharroubi, Idris and Pham,
              Huy\^en and Rosestolato, Mauro},
     TITLE = {Optimal control of path-dependent {M}c{K}ean-{V}lasov {SDE}s
              in infinite-dimension},
   JOURNAL = {Ann. Appl. Probab.},
  FJOURNAL = {The Annals of Applied Probability},
    VOLUME = {33},
      YEAR = {2023},
    NUMBER = {4},
     PAGES = {2863--2918},
      ISSN = {1050-5164,2168-8737},
   MRCLASS = {93E20 (49L25 60K35)},
  MRNUMBER = {4612657},
       DOI = {10.1214/22-aap1880},
       URL = {https://doi.org/10.1214/22-aap1880},
}

@article{salgozghi25,
author = {Federico, Salvatore and Ghilli, Daria and Gozzi, Fausto},
title = {Linear-Quadratic Mean Field Games in Hilbert Spaces},
journal = {SIAM Journal on Mathematical Analysis},
volume = {57},
number = {6},
pages = {5821-5853},
year = {2025},
doi = {10.1137/24M1642895}
}

@article{liufir2024,
author = {Liu, Hanchao and Firoozi, Dena},
title = {Hilbert Space-Valued LQ Mean Field Games: An Infinite-Dimensional Analysis},
journal = {SIAM J. Control Optim.},
volume = {63},
number = {5},
pages = {3297-3327},
year = {2025},
doi = {10.1137/24M1675096}
}

@article {sunexactlln06,
    AUTHOR = {Sun, Yeneng},
     TITLE = {The exact law of large numbers via {F}ubini extension and
              characterization of insurable risks},
   JOURNAL = {J. Econom. Theory},
  FJOURNAL = {Journal of Economic Theory},
    VOLUME = {126},
      YEAR = {2006},
    NUMBER = {1},
     PAGES = {31--69},
      ISSN = {0022-0531,1095-7235},
   MRCLASS = {60F15 (28A35 91B30)},
  MRNUMBER = {2195268},
MRREVIEWER = {Peter\ Eichelsbacher},
       DOI = {10.1016/j.jet.2004.10.005},
       URL = {https://doi.org/10.1016/j.jet.2004.10.005},
}

\end{document}